\documentclass[12pt]{article}
\usepackage{amsmath,amsfonts,amssymb,amsthm,amscd}
\usepackage{xcolor}
\title{Picard-Vessiot extensions, linear differential algebraic groups and their torsors}
\date{\today}
\author{David Meretzky \and Anand Pillay\thanks{Supported by NSF grants DMS-1760212 and DMS-2054271}}

\newtheorem{Theorem}{Theorem}[section]
\newtheorem{Proposition}[Theorem]{Proposition}
\newtheorem{Definition}[Theorem]{Definition}
\newtheorem{Remark}[Theorem]{Remark}
\newtheorem{Lemma}[Theorem]{Lemma}

\newtheorem{Fact}[Theorem]{Fact}

\newtheorem{Question}[Theorem]{Question}

\begin{document}
\maketitle

\begin{abstract} Let $K$ be a differential field with algebraically closed field of constants $C_{K}$.  Let $K^{PV_{\infty}}$ be the (iterated) Picard-Vessiot closure
of $K$.  Let $G$ be a linear differential algebraic group over $K$, and $X$ a differential algebraic torsor for $G$ over $K$. We prove that $X(K^{PV_{\infty}})$ is
Kolchin dense in $X$. In the special case that $G$ is finite-dimensional we prove that $X(K^{PV_{\infty}}) = X(K^{diff})$  (where $K^{diff}$ is the differential 
closure of $K$). 

We   give close relationships between Picard-Vessiot extensions of $K$ and torsors for 
suitable finite-dimensional linear differential algebraic groups over $K$. 

We suggest some differential field analogues of the notion of ``boundedness" for fields (Serre's property (F)). 
\end{abstract}

\section{Introduction and preliminaries}
This paper is motivated by trying to refine or generalize two pieces of work. 

\vspace{2mm}
\noindent
(A) The first is Theorem 1.1 in \cite{Pillay-PV} by the second author, which says that an algebraically closed differential field $K$ has no proper Picard-Vessiot extensions 
if and only if Kolchin's constrained cohomology set $H_{\partial}^{1}(K, G)$ is trivial for any linear differential algebraic group $G$ defined over $K$, 
meaning that any torsor $X$ for a linear differential algebraic group $G$, all over $K$, has a $K$-point. 

\vspace{2mm}
\noindent
(B) The second is the theorem of Serre \cite{Serre} that a field $K$ (of characteristic $0$ for simplicity) is {\em bounded}, namely has finitely many extensions of degree $n$ for each $n$, 
if and only if the Galois cohomology set $H^{1}(K,G)$ is finite for every linear algebraic group $G$ over $K$. 

\vspace{2mm}
\noindent
At the end of \cite{Pillay-PV}, the second author asked explicitly for a differential field version of (B). This paper will touch on this last problem, but will mainly
be concerned with refinements of (A).  We will be giving an informal account of our results in this introduction, followed by  more precise definitions and references.

\vspace{2mm}
\noindent
In (A) an important case is when  $G$ is finite-dimensional.
Likewise in (B) a  first case is  when  $G$ is finite. See Proposition 8 in \cite{Serre} (Chapter 3, Section 4.1) written at the level of generality of profinite groups. 

\vspace{2mm}
\noindent
We will often refer to differential algebraic groups as $DAG$'s, linear differential algebraic groups as $LDAG$'s, and finite-dimensional linear differential algebraic groups
as $fdLDAG$'s. 

\vspace{2mm}
\noindent
This paper is about what we may call ``differential field arithmetic", that is properties of not necessarily differentially closed fields $K$,
especially regarding the properties of the set of $K$-rational points of differential algebraic varieties and groups over $K$. 

We could try to set up a kind of dictionary in passing from fields $K$ to differential fields $(K,\partial)$.
For example a (finite) linear algebraic group over $K$ is  generalized to a (finite-dimensional) $LDAG$ over $K$.
We would like to replace finite Galois extensions of $K$ by Picard-Vessiot ($PV$) extensions of $(K,\partial)$ (obtained from $K$ by 
adjoining a fundamental system of solutions of a homogeneous linear differential equation over $K$). The latter is problematic as a $PV$ extension
 of a $PV$ extension of $K$ is not (contained in) a $PV$ extension of $K$. This will be discussed later.

For an algebraic group $G$ over a field $K$, $H^{1}(K,G)$ classifies the torsors, 
also called principal homogeneous spaces ($PHS$'s),  
for $G$ over $K$ (in the category of algebraic varieties). Kolchin introduced a differential version which he called constrained
cohomology, and which we call differential Galois cohomology. So  for $(K,\partial)$ a differential field and $G$ a differential algebraic group over $K$, 
$H^{1}_{\partial}(K,G) = H^{1}_{\partial}(Aut_{\partial}(K^{diff}/K), G(K^{diff}))$ classifies the differential algebraic torsors over $K$ for $G$. 
In \cite{Kamensky-Pillay}  finiteness results for Galois cohomology  were used used to obtain existence theorems for Picard-Vessiot extensions  with specified properties.  
One of the interests in possible finiteness results for differential Galois cohomology was to extend these existence results to 
{\em parametrized} Picard-Vessiot extensions (in the presence of  several derivations), and  in \cite{Leon-Sanchez-Pillay} such an extension was given.  
However the differential fields $(K,\partial)$ for which finiteness of differential Galois cohomology was proved in 
\cite{Leon-Sanchez-Pillay} were fairly special, namely differentially large in the sense of \cite{LST} and bounded as fields, 
in particular having {\em  no} proper Picard-Vessiot extension with 
 connected differential Galois group.  Moreover for such $K$   $H^{1}_{\partial}(K,G)$ for 
$G$ a $LDAG$ over $K$ essentially reduced to $H^{1}(K,H)$ for an associated linear algebraic group $H$ over $K$, which is finite by Serre's theorem. 

We will see that as soon as a differential field has some rather mild Picard-Vessiot extensions
 then one will have infiniteness of both differential Galois cohomology sets and Picard-Vessiot extensions with a given Galois group.  
So finiteness conditions may be not the appropriate notions (outside very special situations such as discussed in the last paragraph). 
We will offer some tentative suggestions for notions of ``smallness" of both the family of Picard-Vessiot extensions of $K$ and of $H^{1}_{\partial}(K,G)$ for $G$ a $fdLDAG$ over $K$. 

We will assume the differential field $K$ to  have algebraically closed field $C_{K}$ of constants. $K^{diff}$ denotes a fixed differential closure of $K$. 
All differential Galois theory over $K$ goes on inside $K^{diff}$. 
Inside $K^{diff}$ we have various distinguished differential subfiields. First   $K^{PV}$ is the union of all $PV$-extensions of $K$. 
Define inductively, $K^{PV_{1}} = K^{PV}$ and $K^{PV_{n+1}} =   (K^{PV_{n}})^{PV}$. And let $K^{PV_{\infty}} = \cup_{n}K^{PV_{n}}$, the (iterated) Picard-Vessiot closure of $K$.  
So to say that $K$ has no proper Picard-Vessiot extensions means that $K = K^{PV} = K^{PV_{\infty}}$.  The first result is a refinement of (A):
\begin{Proposition} (i) Let $G$ be a $fdLDAG$ over $K$ and $X$ a differential algebraic torsor for $G$ over $K$,  then $G(K^{diff}) = G(K^{PV_{\infty}})$ and $X(K^{diff}) = X(K^{PV_{\infty}})$.
\newline
(ii) For $G$ a (possibly infinite-dimensional) $LDAG$ over $K$, and $X$ a differential algebraic torsor over $K$, $G(K^{PV_{\infty}})$ is Kolchin-dense in $G$ and $X(K^{PV_{\infty}})$ is 
Kolchin-dense in $X$.
\end{Proposition}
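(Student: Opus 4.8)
The plan is to recast both parts as statements about the single field $F:=K^{PV_\infty}$ and then feed them into (A) above. The three facts I need about $F$ are: $F$ is algebraically closed as a field; $C_F=C_K$ (still algebraically closed); and $F$ has no proper Picard-Vessiot extension. The first holds because a finite Galois extension of a differential field with algebraically closed field of constants is a Picard-Vessiot extension (classical), so a finite algebraic extension of $K^{PV_n}$ already lies in $K^{PV_{n+1}}$, hence in $K^{PV_\infty}$. For the third, a Picard-Vessiot extension of $F$ is generated by a fundamental solution matrix of an equation with coefficients already in some $K^{PV_n}$; a Picard-Vessiot extension of $K^{PV_n}$ for that same equation sits inside $K^{PV_{n+1}}$, and since Picard-Vessiot extensions add no new constants, the two fundamental matrices differ by a matrix over $C_F=C_K\subseteq K$, so the first matrix already lies in $K^{PV_{n+1}}\subseteq F$. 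The second fact is immediate from the same no-new-constants property. Granting these, Theorem 1.1 of \cite{Pillay-PV}, applied with base field $F$, gives $H^1_\partial(F,G')=0$ for every LDAG $G'$ over $F$: after base change to $F$, every differential algebraic torsor for an LDAG over $K$ acquires an $F$-point, so in particular $X(K^{PV_\infty})\neq\emptyset$. I then prove, by induction on the length of a composition series of the group, the auxiliary statement: \emph{for every (finite-dimensional, for (i)) LDAG $G'$ over $F$ and every torsor $X'$ for $G'$ over $F$, the conclusion of (i) (resp. (ii)) holds for $X'$ with $F$ in place of $K^{PV_\infty}$}; applying this to the base change of the given $X$ yields the proposition.

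For part (i) I first record that the differential closure adds no new constants beyond the algebraic ones, so $C_{K^{diff}}=C_K$; this is because the generic type of the constant field is not isolated while $K^{diff}$ is atomic over $K$. A finite-dimensional LDAG $G'$ over $F$ has a \emph{finite} composition series, and by the structure theory of finite-dimensional linear differential algebraic groups (Cassidy), together with $H^1_\partial(F,-)=0$ killing the possible twists, each composition factor is a constant group $\mathbb{H}_i(C_F)$ with $\mathbb{H}_i$ an algebraic group over $C_F$. In the base case ($G'$ simple) the torsor is trivial over $F$, so $X'\cong\mathbb{H}(C_F)$ over $F$ and $X'(K^{diff})=\mathbb{H}(C_{K^{diff}})=\mathbb{H}(C_F)=X'(F)$. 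For the inductive step, with $1\to N\to G'\to Q\to 1$ and $Q$ simple: given $a\in X'(K^{diff})$, its image in the torsor $X'/N$ for $Q$ lies in $(X'/N)(F)$ by the base case; the fibre of $X'\to X'/N$ over that point is a torsor for $N$ over $F$ containing $a$, hence (vanishing $H^1_\partial$) has an $F$-point, and by the inductive hypothesis its $K^{diff}$-points equal its $F$-points, so $a\in X'(F)$.

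For part (ii) the same recursion is run, now with possibly infinite-dimensional groups and with "equality of points" replaced by "Kolchin-density". The inductive step is smooth: with $1\to N\to G'\to Q\to 1$ and a nonempty Kolchin-open $U\subseteq X'$, the image of $U$ in the $Q$-torsor $X'/N$ contains a nonempty open, which by density for $Q$ contains an $F$-point $q$; the fibre over $q$ is a trivial $N$-torsor over $F$, and density of $N(F)$ in that fibre (inductive hypothesis) produces a point of $X'(F)\cap U$. The base cases require density of the $F$-points of the basic building blocks: for a constant group $\mathbb{H}(C_F)$ this holds because $C_F$ is algebraically closed (so $\mathbb{H}(C_F)$ is Zariski-dense in $\mathbb{H}$ and, being all of the constant points, is even Kolchin-dense); and for an algebraic group regarded as a differential algebraic group — in particular the full additive and multiplicative groups and the unipotent pieces — it holds because $F$ satisfies no nonzero differential polynomial identity, since $F$ contains a rational function field $C_F(t)$ with $C_F$ infinite and $t$ transcendental over $C_F$, so for every $N$ the jet map $h\mapsto(h,\partial h,\dots,\partial^N h)$ already has Zariski-dense image in $\mathbb{A}^{N+1}$, and one takes products.

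The step I expect to be the main obstacle is making the induction in (ii) legitimate when $G$ is infinite-dimensional: then $G$ has no finite composition series (already the full $\mathbb{G}_a$ is not an iterated extension of its finite-dimensional subgroups), so the recursion must be reorganized "one prolongation level at a time": by the Ritt-Raudenbush theorem any given proper Kolchin-closed subvariety of $X$ is cut out by finitely many differential polynomials of some bounded order $N$, so density may be tested on the finite-dimensional prolongation (jet) group $\nabla_N G$, and one must show the jets of $G(F)$ are Zariski-dense there — which reduces, via the structure of $\nabla_N G$ as an iterated extension built from the basic pieces, to the no-differential-identity property of $F$ and $C_F$ being algebraically closed. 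Setting up this level-by-level bookkeeping, together with the structure theory of general (possibly infinite-dimensional) LDAGs over $F$ and the Zariski-density of $G(F)$ in the Zariski closure $\bar G$, is the technical heart of the argument; part (i) is comparatively soft once the three properties of $K^{PV_\infty}$ above are in hand.
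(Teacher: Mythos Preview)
Your reduction to $F=K^{PV_\infty}$ and your handling of part (i) are essentially the paper's approach: the paper also passes to the algebraically closed, $PV$-closed field $F$ (its Lemma~1.15) and then runs a d\'evissage via normal subgroups (its Claim~1), reducing to factors which, over $F$, are $F$-definably constant groups. Your organization via a full composition series is a cosmetic variant of the paper's solvable-radical/semisimple split; over a $PV$-closed base the simple subquotients are indeed $F$-isomorphic to algebraic groups in the constants, so your base case goes through.

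The genuine gap is in (ii), and it is exactly the one you flag but do not close. A general $LDAG$ has no composition series, and the prolongation/jet bookkeeping you sketch is neither developed nor, as stated, correct: $\nabla_N G$ is not in general an iterated extension of the ``basic pieces'' you list, and the Zariski-density of jets of $G(F)$ in $\nabla_N G$ is not the statement you need. The paper avoids this entirely by invoking the structure theory of $LDAG$'s used in Case~(2) of \cite{Pillay-PV}: every $LDAG$ over $F$ has a normal $1$-connected subgroup with finite-dimensional quotient, and a $1$-connected $LDAG$ admits a \emph{finite} normal series whose successive quotients are $({\mathcal U},+)$, ${\mathcal U}^\times$, or $H({\mathcal U})$ for $H$ a simple algebraic group over $F$. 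This makes the induction (your Claim~4 analogue) terminate, and is the missing idea in your proposal.

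Two smaller points on your base-case density argument. First, your proof that $F$ satisfies no differential polynomial identity via $C_F[t]\subseteq F$ is valid (after the $F$-linear change of coordinates the $N$-jets of degree-$\leq N$ polynomials become $C_F^{N+1}$, which is Zariski-dense in ${\mathcal U}^{N+1}$ since $C_F$ is infinite), and gives an alternative to the paper's route through Mitschi--Singer and elements of arbitrarily high order over a carefully chosen subfield $K_1$. Second, for a simple algebraic group $H$ you cannot pass directly from ``$F$ has no differential identity'' to ``$H(F)$ is Kolchin-dense in $H({\mathcal U})$''; the paper uses that simple algebraic groups are rational varieties (Borel), so a Zariski-open of $H$ is isomorphic to a Zariski-open of affine space, which is where your $F^n$ density applies. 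You should make that reduction explicit.
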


This of course implies Theorem 1.1 of \cite{Pillay-PV}. Note that each of (i), (ii) for $X$ gives the same result for $G$ (as $G$ is in bijection with $X$ over $K$ together with any 
point of $X$).  Similarly,  modulo Theorem
1.1 of \cite{Pillay-PV}, each of (i), (ii) for $G$ implies it for $X$.   One can also view Proposition 1.1 (ii) for $G$ as a differential analogue of the fact that for any characteristic $0$ 
field $K$ and connected linear algebraic group $G$ over $K$, $G(K)$ is Zariski-dense in $G$. 

We will discuss Kolchin density in subsection 1.4 below.


It already follows from Theorem 1.1 of \cite{Pillay-PV} (and exact sequences of cohomology) that  $H^{1}_{\partial}(K,G)$ is canonically
isomorphic to $H^{1}_{\partial}(Aut_{\partial}(K^{PV_{\infty}}/K), G(K^{PV_{\infty}}))$ for any  $LDAG$ $G$ over $K$. But  Proposition 1.1 (i)
implies the stronger statement in the case that $G$ is a $fdLDAG$ over $K$, that we have {\em equality} at the level of cocycles.

We do not manage to find a suitable extension of (B) to the differential context, but we do give some closer connections 
between the class of $PV$-extensions of $K$ and  torsors for suitable $fdLDAG$'s over $K$, which gives some partial generalizations of (B) with respect to 
suitable notions of ``smallness", as well as refinements of (A).

The Picard-Vessiot theory concerns homogeneous linear differential equations over a differential field $K$.
We start with such a linear differential equation over $K$ in vector form 
\newline
(*) $\partial Y = AY$, 
\newline
$Y$ being a $n\times 1$ column vector
of unknowns, and $A$ an $n\times n$ matrix with coefficients from $K$. 

 A ``fundamental system of solutions" of (*) is a nonsingular $n\times n$ matrix $Z$ (in $GL_{n}(K^{diff})$) such that $\partial Z = AZ$. The $PV$ extension of $K$
corresponding to the equation (*) is  the (differential) field $K(Z)$.  It depends only on the equation (*), not the choice of $Z$.
The differential algebraic  locus of $Z$ over $K$ 
will be a left torsor $X$ over $K$ for the ``intrinsic" Galois group of (*), a differential algebraic subgroup $G$ of $GL_{n}(K^{diff})$ over $K$.  
So to the linear differential equation  $\partial Y = AY$ we obtain a differential algebraic torsor $(G,X)$, depending only on the equation (*), as well as a Picard-Vessiot extension $L$ of $K$.

\begin{Proposition}  Let $\partial Y_{1} = A_{1}Y_{1}$ and $\partial Y_{2} = A_{2}Y_{2}$ be linear differential equations over $K$ with corresponding torsors  
$(G_{1}, X_{1})$ and $(G_{2}, X_{2})$ and Picard-Vessiot extensions $L_{1}$, $L_{2}$. Then $L_{1} = L_{2}$ if and only if  $(G_{1}, X_{1})$ and $(G_{2}, X_{2})$ are isomorphic as
differential algebraic torsors over $K$.
\end{Proposition}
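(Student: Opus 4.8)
The plan is to unwind the standard dictionary between a homogeneous linear differential equation, its Picard--Vessiot extension, and its torsor. I work inside $K^{diff}$ throughout, using the following facts (either recalled in the excerpt or standard from the cited differential Galois theory): $C_{K^{diff}}=C_{K}$; $L_{i}=K(Z_{i})$, where $Z_{i}$ realizes the generic type of the irreducible $\partial$-variety $X_{i}=\mathrm{loc}(Z_{i}/K)$; $X_{i}$ is contained in the solution variety $\{Y\in GL_{n_{i}}:\partial Y=A_{i}Y\}$; the left-translation ``cocycle'' elements $g_{i,\sigma}:=\sigma(Z_{i})Z_{i}^{-1}$ (for $\sigma\in\mathrm{Aut}_{\partial}(K^{diff}/K)$, which stabilizes $L_{i}$ since $L_{i}/K$ is normal) lie in $G_{i}$ and are Kolchin-dense in it; and the Picard--Vessiot extension of $K$ for $(*_{i})$ is the \emph{unique} differential subfield of $K^{diff}$ which is a Picard--Vessiot extension of $K$ for $(*_{i})$ --- indeed any two fundamental systems of solutions in $K^{diff}$ differ by a matrix over $C_{K^{diff}}=C_{K}\subseteq K$, hence generate the same differential field.

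For the implication ``isomorphic torsors $\Rightarrow L_{1}=L_{2}$'' it suffices to have a $K$-isomorphism $f\colon X_{1}\to X_{2}$ of $\partial$-varieties. Since $f$ and $f^{-1}$ are given by differential rational functions over $K$, the matrix $f(Z_{1})$ has entries in $K(Z_{1})=L_{1}$ and in fact $K(f(Z_{1}))=K(Z_{1})=L_{1}$. But $f(Z_{1})$ lies in $X_{2}$, so it is a fundamental system of solutions of $(*_{2})$, and it realizes the generic type of $X_{2}$ because $f$ is a $K$-isomorphism; thus $K(f(Z_{1}))$ is a Picard--Vessiot extension of $K$ for $(*_{2})$ contained in $K^{diff}$, so equals $L_{2}$ by the uniqueness recalled above. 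Hence $L_{1}=L_{2}$.

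For the converse, suppose $L_{1}=L_{2}=L$. From $Z_{2}\in L=K(Z_{1})$ and $Z_{1}\in L=K(Z_{2})$ I obtain differential rational maps $E,F$ over $K$ with $Z_{2}=E(Z_{1})$ and $Z_{1}=F(Z_{2})$; since $F(E(Z_{1}))=Z_{1}$ and the $Z_{i}$ are generic, $E$ and $F$ are mutually inverse dominant $K$-maps between $X_{1}$ and $X_{2}$. The crucial point is that $E$, having coefficients in $K$, commutes with $\mathrm{Aut}_{\partial}(K^{diff}/K)$; applied to $\sigma\in\mathrm{Aut}_{\partial}(L/K)$ this yields $E(g_{1,\sigma}Z_{1})=\sigma(E(Z_{1}))=\sigma(Z_{2})=g_{2,\sigma}Z_{2}=g_{2,\sigma}E(Z_{1})$, so $E$ sends each $g_{1,\sigma}$ to $g_{2,\sigma}$. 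To upgrade this to a torsor isomorphism I would use the classical Galois group as a scaffold: $\rho_{i}\colon\mathrm{Aut}_{\partial}(L/K)\to GL_{n_{i}}(C_{K})$, $\sigma\mapsto Z_{i}^{-1}\sigma(Z_{i})$, is an injective homomorphism with image an algebraic group $H_{i}$ over $C_{K}$ (it is a homomorphism because its values are constants, hence fixed by $\mathrm{Aut}_{\partial}(L/K)$, which kills the cocycle twist); $\rho_{2}\circ\rho_{1}^{-1}$ is an isomorphism $\iota\colon H_{1}\to H_{2}$ over $C_{K}$ since the classical differential Galois group is intrinsic to $L/K$; and $G_{i}=Z_{i}H_{i}Z_{i}^{-1}$ as $\partial$-algebraic groups over $K$. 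I would then set $\phi:=\bigl(h\mapsto Z_{2}hZ_{2}^{-1}\bigr)\circ\iota\circ\bigl(g\mapsto Z_{1}^{-1}gZ_{1}\bigr)\colon G_{1}\to G_{2}$, an isomorphism of $\partial$-algebraic groups a priori over $L$; one computes $\phi(g_{1,\sigma})=g_{2,\sigma}$, and, $\phi$ being a homomorphism, this forces $\phi$ to be fixed by every $\sigma\in\mathrm{Aut}_{\partial}(L/K)$ (the inner twists by $g_{i,\sigma}$ arising from acting by $\sigma$ cancel), so $\phi$ descends to $K$. Finally the identity $E(gZ_{1})=\phi(g)Z_{2}$ holds for $g$ in the Kolchin-dense set of cocycle elements, hence for all $g\in G_{1}$, and composing with the orbit isomorphism $G_{1}\to X_{1}$, $g\mapsto gZ_{1}$, exhibits $E$ as an everywhere-defined morphism; so $E$ is an isomorphism with inverse $F$, and $(\phi,E)$ is an isomorphism of differential algebraic torsors over $K$.

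I expect the main obstacle to be the converse direction, and within it the step of promoting the a priori merely rational $K$-map $E\colon X_{1}\dashrightarrow X_{2}$ to a genuine torsor isomorphism: one must simultaneously produce the group isomorphism $\phi\colon G_{1}\to G_{2}$ over $K$ and verify equivariance, which is where the classical Galois-group scaffolding (the description $G_{i}=Z_{i}H_{i}Z_{i}^{-1}$ and the intrinsic-ness of $H_{i}$, both from the cited differential Galois theory) together with Galois descent does the real work. The forward direction, by contrast, is immediate from the uniqueness of the Picard--Vessiot extension inside $K^{diff}$ and the inclusion $X_{i}\subseteq\{\,Y:\partial Y=A_{i}Y\,\}$.
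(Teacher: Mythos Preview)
Your proof is correct, and both directions reach the same conclusion as the paper, but the converse direction is organized quite differently. The paper works entirely with the \emph{intrinsic} Galois group: it defines $j:X_{1}\to X_{2}$ in one stroke as the $K$-definable bijection whose graph is the set of realizations of $tp(\bar b_{1},\bar b_{2}/K)$ (using that this type is isolated, since $L\subseteq K^{diff}$), and then sets $i=\rho_{2}\circ\rho_{1}^{-1}$ directly, where $\rho_{i}(\sigma)=\sigma(\bar b_{i})\bar b_{i}^{-1}\in G_{i}$. The $K$-definability of $i$ is obtained not by Galois descent but by writing down the explicit formula $i(g)=j(g\bar b_{1})\,j(\bar b_{1})^{-1}$ and observing that it does not depend on the choice of $\bar b_{1}\in X_{1}$. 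By contrast, you pass through the \emph{extrinsic} groups $H_{i}\subset GL_{n_{i}}(C_{K})$, conjugate back to produce $\phi$ over $L$, and then run a descent argument; and you treat $E$ as an a priori rational map that must be extended using the group action. Your $\phi$ is literally the paper's $i$ (the conjugations cancel), so the detour through the constants is unnecessary, though it does make the computation self-contained for a reader without the model-theoretic facts about isolated types. One small point: your justification that $\iota$ is over $C_{K}$ (``since the classical differential Galois group is intrinsic to $L/K$'') is not quite the reason; the correct reason is stable embeddedness of the constants in $DCF_{0}$, which forces any definable map between subsets of $\mathcal{C}$ to be definable over $C_{K}$. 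The paper's route sidesteps this by never leaving the intrinsic side.
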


\begin{Proposition} Let $(G,X)$ be a differential algebraic right torsor over $K$ where $G$ is a fdLDAG ``internal" to the constants $C$
 (of an ambient differentially closed field such as $K^{diff}$).  Let $L_{G}$ be the differential field generated over $K$ by $G(K^{diff})$ and likewise 
$L_{X}$ the differential field generated over $K$ by $X(K^{diff})$. Then $K_{G}\subseteq K_{X}$, both are Picard-Vessiot extensions of $K$, and the 
differential Galois group of $L_{X}$ over $L_{G}$ differentially algebraically over $L_{X}$ embeds in $G(K^{diff})$.
\end{Proposition}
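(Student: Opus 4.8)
The plan starts by fixing a point $x_{0}\in X(K^{diff})$, which exists because every differential algebraic torsor over the differentially closed field $K^{diff}$ is trivial (triviality of Kolchin's constrained cohomology over a differentially closed field). Since $X$ is a right torsor there is a $K$-definable differential rational division morphism $\delta\colon X\times X\to G$ characterised by $x\cdot\delta(x,x')=x'$. Then every $g\in G(K^{diff})$ equals $\delta(x_{0},\,x_{0}\cdot g)$ with $x_{0},x_{0}\cdot g\in X(K^{diff})$, so $G(K^{diff})\subseteq L_{X}$ and hence $L_{G}\subseteq L_{X}$; moreover $X(K^{diff})=x_{0}\cdot G(K^{diff})$, so in fact $L_{X}=L_{G}\langle x_{0}\rangle$. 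Both fields lie in $K^{diff}$, so $C_{L_{G}}=C_{L_{X}}=C$.

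Next I would show $L_{G}/K$ is Picard-Vessiot. Since $G$ is a finite-dimensional $LDAG$ over $K$ internal to $C$, the structure theory of differential algebraic groups internal to the constants produces a linear algebraic group $\bar{G}$ over $C$, a differential rational isomorphism $\psi\colon G\to\bar{G}(C)$, and a Picard-Vessiot extension $M$ of $K$ over which $\psi$ is defined. As $\psi$ sends every point of $G$ to a point of $\bar{G}(C)$, we get $G(K^{diff})=\psi^{-1}(\bar{G}(C))\subseteq M$ (for $h\in\bar{G}(C)\subseteq C\subseteq K$ the point $\psi^{-1}(h)$ lies in $M$), so $L_{G}\subseteq M$ and $L_{G}/K$ is finitely generated. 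Because $G$ is defined over $K$, the set $G(K^{diff})=G(M)$ is stable under $\mathrm{Aut}_{\partial}(M/K)$, hence so is the intermediate differential field $L_{G}$; by the Picard-Vessiot Galois correspondence $\mathrm{Gal}_{\partial}(M/L_{G})$ is then normal in $\mathrm{Gal}_{\partial}(M/K)$, so $L_{G}/K$ is a Picard-Vessiot extension, with linear differential Galois group over $C$.

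For $L_{X}/K$: it is finitely generated (it equals $L_{G}\langle x_{0}\rangle$) and, being generated over $K$ by $X(K^{diff})$ with $X$ internal to $C$, it is strongly normal over $K$; hence $\mathrm{Gal}_{\partial}(L_{X}/K)$ is an algebraic group over $C$, and it suffices to show it is linear. The exact sequence $1\to\mathrm{Gal}_{\partial}(L_{X}/L_{G})\to\mathrm{Gal}_{\partial}(L_{X}/K)\to\mathrm{Gal}_{\partial}(L_{G}/K)\to1$ (legitimate as $L_{G}/K$ is normal) exhibits $\mathrm{Gal}_{\partial}(L_{X}/K)$ as an extension of $\mathrm{Gal}_{\partial}(L_{G}/K)$, which is linear by the previous paragraph, by $\mathrm{Gal}_{\partial}(L_{X}/L_{G})$, which is linear once we produce its embedding into $G(K^{diff})$ below; an extension of a linear algebraic group by a linear algebraic group is linear, so $L_{X}/K$ is Picard-Vessiot.

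Finally, the embedding. As $L_{G}\subseteq L_{X}$ and $L_{X}/K$ is Picard-Vessiot, $L_{X}/L_{G}$ is Picard-Vessiot, so $\mathrm{Gal}_{\partial}(L_{X}/L_{G})$ is a differential algebraic group. Any $\sigma$ in it fixes $L_{G}\supseteq K$ pointwise, hence permutes $X(K^{diff})$ and fixes $G(K^{diff})$ pointwise; put $c_{\sigma}:=\delta(x_{0},\sigma(x_{0}))\in G(K^{diff})$, so $\sigma(x_{0})=x_{0}\cdot c_{\sigma}$. Then $\sigma\tau(x_{0})=\sigma(x_{0}\cdot c_{\tau})=\sigma(x_{0})\cdot c_{\tau}=x_{0}\cdot(c_{\sigma}c_{\tau})$ (using $\sigma(c_{\tau})=c_{\tau}$), so $\sigma\mapsto c_{\sigma}$ is a group homomorphism $\mathrm{Gal}_{\partial}(L_{X}/L_{G})\to G(K^{diff})$; it is injective because $c_{\sigma}=e$ forces $\sigma(x_{0})=x_{0}$, whence $\sigma$ fixes $x_{0}\cdot G(K^{diff})=X(K^{diff})$ and therefore fixes $L_{X}$. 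Under the standard identification, over $L_{X}$, of $\mathrm{Gal}_{\partial}(L_{X}/L_{G})$ with $\{\sigma(x_{0}):\sigma\}\subseteq X(K^{diff})$, this homomorphism is $x'\mapsto\delta(x_{0},x')$, an injective morphism of differential algebraic groups defined over $L_{X}$ (as $x_{0}\in L_{X}$ and $\delta$ is $K$-definable), which is the assertion. The step I expect to cost the most is the second paragraph together with the claim that $L_{X}/K$ is strongly normal: extracting from the structure theory of finite-dimensional $LDAG$s internal to the constants exactly the statement that $G$ becomes isoconstant (to the constant points of a \emph{linear} algebraic group) over a \emph{Picard-Vessiot} extension of $K$, and organising the resulting Galois-theoretic bookkeeping; the containment $L_{G}\subseteq L_{X}$ and the final cocycle computation are routine.
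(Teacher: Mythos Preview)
Your argument is essentially the paper's own proof: the containment $L_{G}\subseteq L_{X}$ via the division map, $L_{G}/K$ Picard--Vessiot from internality (the paper packages this as Lemma~1.17), $L_{X}/K$ strongly normal from internality of $X$ (Lemma~1.21), the cocycle embedding $\sigma\mapsto c_{\sigma}$ of $\mathrm{Gal}_{\partial}(L_{X}/L_{G})$ into $G(K^{diff})$, and finally the linear-by-linear extension argument plus Kolchin's criterion (Fact~1.20) to conclude $L_{X}/K$ is Picard--Vessiot. The one thing to reorder is the opening of your last paragraph: you invoke ``$L_{X}/K$ is Picard--Vessiot'' to deduce $L_{X}/L_{G}$ is Picard--Vessiot \emph{before} that has been proved, which is circular since the embedding is what you need (in paragraph~3) to establish linearity and hence Picard--Vessiot-ness; simply replace that appeal by the fact that $L_{X}/L_{G}$ is strongly normal (from $L_{X}/K$ and $L_{G}/K$ strongly normal), which is all the cocycle computation and the differential-algebraic nature of the embedding require.
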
 

One notion of smallness that we will discuss is about suitable collections of $PV$ extensions being ``of finite type" in the sense of generating a $PV$-extension of $K$. Likewise
for a suitable $fdLDAG$ $G$ over $K$, the collection of torsors $X$ for $G$ (or rather their sets of $K^{diff}$ points) should  generate a $PV$-extension of $K$.  This is discussed in Section 2.3. 

In the remainder of this section we will give proper references and  definitions.
This paper is about differential algebraic geometry, in the sense of Kolchin, and differential Galois theory.   The model theory of differentially closed fields is very closely related to, even synonymous,  with differential algebraic geometry. Model theory also provides generalizations of the  differential Galois theories of Picard-Vessiot and Kolchin, although in this paper we will only be concerned with the Picard-Vessiot theory. 

The reader can consult both the introduction and preliminaries section of \cite{Pillay-PV}, as well as \cite{Pillay-foundational}. 
The latter gives a comprehensive account of Kolchin's differential algebraic geometry and the connection to model theory. As mentioned in \cite{Pillay-PV}, the model theoretic machinery subsumes that of Kolchin but provides a certain general point of view as well as new technical  tools. So here we will mainly fix notation, although we there are a few repetitions of material from \cite{Pillay-PV} and \cite{Kamensky-Pillay}. In the meantime recall that when it comes to groups and their homogeneous spaces, differential algebraic objects coincide with definable objects, including with respect to differential fields of definition. 

We let $K = (K,\partial)$ be a differential field of characteristic $0$ whose field of constants is denoted $C_{K}$.  We will often suppress the symbol for the derivation in both $K$ and differential fields in which $K$ embeds.
We let ${\mathcal U}$ be a ``universal domain" in the sense of Kolchin, which model-theoretically is a $\kappa$-saturated differentially closed field of cardinality $\kappa$ which contains $K$ and where $\kappa > |K|$.  Such ${\mathcal U}$ exists due to $\omega$-stability of the theory $DCF_{0}$ of differentially closed fields in the language of differential unitary rings.  ${\mathcal C}$ denotes the field of constants of ${\mathcal U}$. 
We fix a differential (or as Kolchin says ``constrained") closure $K^{diff}$ of $K$ inside ${\mathcal U}$. 
Differential algebraic varieties, namely solution sets of finite systems of differential polynomial equations in $n$ 
differential indeterminates, will be considered as point sets in ${\mathcal U}^{n}$. For $X$ such an object and $L$ a differential subfield of ${\mathcal U}$ over which $X$ is defined, $X(L)$ denotes the points of $X$ in $L^{n}$.
Likewise definable will mean definable in the structure ${\mathcal U}$. 

Given $K$ and a subset $A$ of ${\mathcal U}$, $K\langle A \rangle$ denotes the differential field generated over $K$ by $A$.

We will mainly work inside $K^{diff}$.

\vspace{2mm}
\noindent
{\bf Assumption.}  $C_{K}$ is algebraically closed. 

\vspace{2mm}
\noindent
This assumption is so as to have the existence of Picard-Vessiot extensions of any linear DE over $K$.  Note that the assumption is NOT satisfied for the differential subfields $L$ of models of the theories such as $CODF$ of differential fields studied in \cite{Leon-Sanchez-Pillay}, although in this case we still have the existence of ``nice" PV-extensions. 
Anyway it follows from the assumption above  that $C_{K} = C_{K^{diff}}$.

\vspace{2mm}
\noindent
We wish to thank Michael Wibmer for various communications about the material in this paper as well as some questions, in particular about
the Kolchin denseness of $G(K^{PV_{\infty}})$ which we prove.   Thanks also to Omar Leon Sanchez for his
invitation to the second author to Manchester in the summer of 2023 where some of the work on, and writing of, the paper was done. 

\subsection{Linear differential algebraic groups and their torsors}

The notion of a differential algebraic group ($DAG$) over a differential field is due to Kolchin \cite{Kolchin-DAG}, and is roughly defined as a differential algebraic variety $X$
equipped with a group structure which is a morphism $X\times X \to X$ in the sense of differential algebraic varieties.
In \cite{Pillay-foundational} we point out equivalent descriptions such as (i) groups definable in ${\mathcal U}$ and (ii) definable (in ${\mathcal U}$) subgroups of algebraic groups. 
A $DAG$ over $K$ is one defined with parameters from $K$. 

Likewise a differential algebraic torsor for a $DAG$ $G$ can be defined as a definable set $X$ with a definable action of $G$ on $X$ which is regular (strictly
transitve).  We can have left actions or right actions, in which case we talk about left torsor or right torsors.  From a left torsor we get a right torsor by defining
$x*g = g^{-1}x$.   Again definability of a torsor $(G,X)$ over $K$ means that all the data are definable over $K$. 

\begin{Definition} (i) Let $(G_{1}, X_{2})$ and $(G_{2}, X_{2})$ be differential algebraic (left) torsors over $K$. We will say that they are isomorphic over $K$,
if there are a $K$-definable group isomorphism $i:G_{1}\to G_{2}$ and $K$-definable bijection $j:X_{1}\to X_{2}$,
such that for each $g\in G_{1}$ and $x\in X_{1}$, $j(gx_{1}) = i(g)j(x_{1})$. 
\newline
(ii) Suppose $X_{1}$ and $X_{2}$ are (left) differential algebraic torsors for the same $DAG$ $G$, all defined over $K$.
We say that $X_{1}$ and $X_{2}$ are $G$-isomorphic over $K$, if there are $i:G\cong G$, and $j:X_{1}\to X_{2}$ as in (i) above such that $i$ is the identity.
Namely there is a bijection $j:X_{1}\to X_{2}$ defined over $K$ which commutes with the given actions of $G$. 
\end{Definition}

Likewise for right torsors. Kolchin's constrained cohomology is an adaptation of the Galois cohomology of algebraic groups to differential fields and differential algebraic groups. In \cite{Pillay-Galois}, this was generalized to a model-theoretic setting and we will freely use results and notation from that paper.  We give a brief summary, following notation from \cite{Pillay-Galois}.  Let us fix a $DAG$ $G$ over $K$. 

$H^{1}_{\partial}(K,G)$, or more precisely $H_{\partial}(Aut_{\partial}(K^{diff}/K), G(K^{diff}))$ is the set of $K$-definable (left) cocycles from ${\mathcal G} = Aut_{\partial}(K^{diff}/K)$ to $G(K^{diff})$, 
up to the equivalence relation of being cohomologous.   
Here a cocycle is a crossed homomorphism $f$ from ${\mathcal G} = Aut_{\partial}(K^{diff}/K)$ to $G(K^{diff})$, 
namely $f(\sigma\tau) = f(\sigma)\sigma(f(\tau))$ for all $\sigma, \tau \in {\mathcal G}$.  Definability over $K$ means that for some $K$-definable function $h(-,-)$, 
and some tuple $a$ from $K^{diff}$, $f(\sigma) = h(a,\sigma(a))$ for all $\sigma\in {\mathcal G}$.  Finally, cocycles $f$ and $g$ are cohomologous if  
$f(\sigma) = b^{-1}g(\sigma) \sigma(b)$ for some $b\in G(K^{diff})$ and all $\sigma\in {\mathcal G}$.  By a trivial cocycle we mean something of the form $b^{-1}\sigma(b)$ for some $b\in G(K^{diff})$. 

\begin{Fact} There is a natural isomorphism (of pointed sets) between the set of differential algebraic right torsors for $G$ over $K$, up to $G$-isomorphism over $K$, and $H^{1}_{\partial}(K,G)$.
\end{Fact}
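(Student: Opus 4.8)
The plan is to exhibit the two mutually inverse maps explicitly, reducing the only delicate point --- that the objects produced are defined over $K$ and not merely over $K^{diff}$ --- to the Galois-descent formalism of \cite{Pillay-Galois}. In one direction, given a differential algebraic right torsor $(G,X)$ over $K$: since $X$ is a nonempty $K$-definable set and $K^{diff}\preceq{\mathcal U}$, we may fix $a\in X(K^{diff})$, and for $\sigma\in{\mathcal G}=Aut_{\partial}(K^{diff}/K)$ regularity of the action provides a unique $f_{a}(\sigma)\in G(K^{diff})$ with $a\cdot f_{a}(\sigma)=\sigma(a)$. Applying $\sigma$ to the defining relation for $f_{a}(\tau)$ and using that the action is over $K$ gives $f_{a}(\sigma\tau)=f_{a}(\sigma)\sigma(f_{a}(\tau))$, and $f_{a}(\sigma)=h(a,\sigma(a))$ for the $K$-definable ``division'' map $h\colon X\times X\to G$, so $f_{a}$ is a $K$-definable cocycle. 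Replacing $a$ by $a\cdot b$ with $b\in G(K^{diff})$ replaces $f_{a}$ by the cohomologous cocycle $b^{-1}f_{a}\,\sigma(b)$, and a $G$-isomorphism $X_{1}\to X_{2}$ over $K$ carries a base point of $X_{1}$ to one of $X_{2}$ with the same associated cocycle; hence $(G,X)\mapsto[f_{a}]$ is a well-defined map $\Phi$ from $G$-isomorphism classes of right torsors to $H^{1}_{\partial}(K,G)$. It is a morphism of pointed sets, since $X$ has a $K$-point iff $[f_{a}]$ is trivial: if $a\in X(K)$ then $f_{a}\equiv e$; conversely if $f_{a}(\sigma)=b^{-1}\sigma(b)$ then $a\cdot b^{-1}$ is fixed by all of ${\mathcal G}$, hence lies in $X(K^{diff})^{\mathcal G}=X(K)$ by the standard fact (using $C_{K}$ algebraically closed) that $K$ is the fixed field of ${\mathcal G}$ on $K^{diff}$ --- and having a $K$-point is equivalent to being $G$-isomorphic over $K$ to the trivial torsor $(G,G)$.

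In the other direction, given a $K$-definable cocycle $f$ I would apply the twisting construction. Write $f(\sigma)=h(a,\sigma(a))$ with $a\in K^{diff}$ and $h$ a $K$-definable function; the cocycle identity forces $h(a,\sigma\tau(a))=h(a,\sigma(a))\,h(\sigma(a),\sigma\tau(a))$ (using that $h$ is over $K$), which says that translation by the values of $f$ glues consistently, and one uses this to descend the trivial torsor $(G,G)$ twisted by $f$ (that is, $G(K^{diff})$ with the modified ${\mathcal G}$-action $\sigma\colon g\mapsto f(\sigma)\,\sigma(g)$) to a $K$-definable right torsor $X_{f}$ for $G$ carrying a distinguished $K^{diff}$-point $x_{0}$ with $\sigma(x_{0})=x_{0}\cdot f(\sigma)$; then $\Phi$ sends the class of $X_{f}$ to $[f]$. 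Conversely, starting from $(G,X)$ with base point $a$, the twist $X_{f_{a}}$ of $(G,G)$ by $f_{a}$ admits a $G$-equivariant bijection to $X$ over $K$ sending $x_{0}$ to $a$; hence $\Phi$ is a bijection, and being compatible with the distinguished elements and with pushforward of torsors and composition of cocycles along $K$-homomorphisms $G\to G'$, it is the asserted natural isomorphism of pointed sets.

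The step I expect to be the main obstacle is the effectivity of the twisting/descent in the previous paragraph: that the twisted structure --- a priori only described with parameters from $K^{diff}$ --- descends to an honest $K$-definable differential algebraic torsor, and likewise that the $G$-equivariant bijections appearing above are $K$-definable and not merely $K^{diff}$-definable. This is exactly the general model-theoretic Galois descent developed in \cite{Pillay-Galois}, specialized here to differentially closed fields and differential algebraic groups; I would quote the relevant statements from there rather than reprove them, so that what remains is the routine bookkeeping with cocycles and torsor actions indicated above.
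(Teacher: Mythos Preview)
Your proposal is correct and follows the standard construction; in fact the paper does not give its own proof of this statement at all --- it is recorded as a \emph{Fact}, with the content attributed to \cite{Pillay-Galois} (see the paragraph preceding the Fact). So there is nothing to compare: you have supplied exactly the argument that the cited reference contains, including the identification of descent/twisting as the only nontrivial step and the appropriate deferral to that reference for it.

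One small remark: the parenthetical ``using $C_{K}$ algebraically closed'' in your argument that ${\mathcal G}$-invariants in $K^{diff}$ lie in $K$ is not needed. This is a general feature of the model theory of $DCF_{0}$ (or of prime models in $\omega$-stable theories): $K^{diff}$ is atomic over $K$, so ${\mathcal G}$-fixed points coincide with $dcl(K)=K$. The standing assumption on $C_{K}$ in the paper is there for the existence of Picard--Vessiot extensions, not for this descent step.
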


Analogously for left torsors.
Note that when $G(K^{diff}) = G(K)$, then ${\mathcal G}$ acts trivially on $G(K^{diff})$ and so a $K$-definable cocycle is just a $K$-definable homomorphism from ${\mathcal G}$ to $G(K^{diff})$.  If moreover $G$ is commutative then  two such homomorphisms are cohomologous iff they are equal.   It follows, for example, that

\begin{Lemma} Let $G$ be the additive group of the constants, and suppose that $H^{1}_{\partial}(K,G)$ is nontrivial (namely there is a torsor for $G$ over $K$ with no $K$-point). Then
$H^{1}(K,G)$ is infinite. 
\end{Lemma}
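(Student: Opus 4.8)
The plan is to use the description of $H^{1}_{\partial}(K,G)$ recalled in the paragraph preceding the Lemma: since $G$ is the additive group of the constants we have $G(K^{diff}) = (C_{K^{diff}},+) = (C_{K},+) = G(K)$ (the middle equality being a consequence of the standing Assumption), so ${\mathcal G} = Aut_{\partial}(K^{diff}/K)$ acts trivially on $G(K^{diff})$, a $K$-definable cocycle is exactly a $K$-definable homomorphism $f\colon {\mathcal G}\to (C_{K},+)$, and two such homomorphisms are cohomologous if and only if they are equal (the trivial cocycle being the zero homomorphism). Thus the hypothesis that $H^{1}_{\partial}(K,G)$ is nontrivial gives a $K$-definable homomorphism $f\colon {\mathcal G}\to (C_{K},+)$ which is not identically zero; fix a $K$-definable function $h(-,-)$ and a tuple $a$ from $K^{diff}$ with $f(\sigma) = h(a,\sigma(a))$ for all $\sigma\in{\mathcal G}$.

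The main step is then to manufacture infinitely many pairwise non-cohomologous such cocycles by scaling. For $c\in C_{K}$ define $(cf)(\sigma) = c\cdot f(\sigma)$. This is again a homomorphism ${\mathcal G}\to (C_{K},+)$, since $f$ is one and the action is trivial, and it is $K$-definable: as $c\in C_{K}\subseteq K$, the function $(x,y)\mapsto c\cdot h(x,y)$ is $K$-definable, and $(cf)(\sigma) = c\cdot h(a,\sigma(a))$. Finally, $cf$ and $c'f$ are cohomologous --- equivalently, equal --- precisely when $(c-c')f$ is identically zero; picking $\sigma_{0}$ with $f(\sigma_{0})\neq 0$ and using that $C_{K}$ is a field of characteristic $0$, this forces $c = c'$. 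Hence $c\mapsto [cf]$ is an injection of $C_{K}$ into $H^{1}_{\partial}(K,G)$, and since $C_{K}$ is algebraically closed, hence infinite, $H^{1}_{\partial}(K,G)$ is infinite.

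I expect essentially no obstacle here: the one point needing a line of verification is that the scaled cocycles $cf$ remain $K$-definable, which is immediate because the relevant scalars lie in $C_{K}\subseteq K$ while definability of a cocycle over $K$ only involves a $K$-definable function together with a tuple from $K^{diff}$. (The same argument shows more generally that $H^{1}_{\partial}(K,G)$ for this $G$ carries a natural $C_{K}$-vector space structure and is nonzero as soon as it is nontrivial, so it is either trivial or of dimension at least $1$ over the infinite field $C_{K}$.)
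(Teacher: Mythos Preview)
Your proof is correct and follows essentially the same approach as the paper: both exploit that $G(K^{diff})=(C_{K},+)$ is fixed pointwise by ${\mathcal G}$, so cocycles are $K$-definable homomorphisms and cohomologous means equal, and then produce infinitely many distinct such homomorphisms by scaling a nontrivial one by nonzero elements of $C_{K}$ (the paper phrases this as composing with the $K$-definable automorphisms of $(C_{K},+)$). You simply make explicit a couple of points the paper leaves to the reader, in particular the $K$-definability of $cf$.
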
 
\begin{proof} So $G(K^{diff}) = (C_{K^{diff}},+) = (C_{K},+)$. Now $(C_{K},+)$  has infinitely many $K$-definable automorphisms, 
namely multiplication by any nonzero element of $C_{K}$. Composing the $K$-definable homomorphism corresponding to the torsor $X$ by these 
automorphsms gives infinitely many distinct $K$-definable homomorphisms from ${\mathcal G}$ to $G(K^{diff})$ so yielding infiniteness of $H^{1}(K,G)$ by above remarks.
\end{proof} 

We will be focusing on linear differential algebraic groups, namely differential algebraic (equivalently definable) subgroups of $GL_{n}({\mathcal U})$ for some $n$.
We call these $LDAG$'s. An important further special case is when $G$ is finite-dimensional. Remember that a $K$-definable set $X$ is finite-dimensional if there is a 
finite bound on the transcendence degree of $K\langle a\rangle$ over $K$ for $a\in X$.  We call finite-dimensional $LDAG$'s, $fdLDAG$'s

\subsection{Galois theory of linear differential equations}
By a (homogeneous) linear differential equation over $K$ in vector form we mean something of the form 
\newline
(*) $\partial Y = AY$ where $Y$ is an $n\times 1$ column vector of unknowns and $A$ is an $n\times n$ matrix over $K$.  

The solution set of (*)  in ${\mathcal U}$  is a $n$-dimensional vector space ${\mathcal Y}$   
over ${\mathcal C}$.  Likewise the solution set
in $K^{diff}$ is an $n$-dimensional vector space over $C_{K^{diff}} = C_{K}$. By a {\em fundamental system of
solutions of (*)} we mean a basis for ${\mathcal Y}(K^{diff})$ over $C_{K}$.  The Picard-Vessiot extension of $K$ for the equation (*)
is precisely the (differential) field generated over $K$ by such a fundamental system. Equivalently it is the (differential) field extension of $K$ generated
by the full set ${\mathcal Y}(K^{diff})$ of solutions of ${\mathcal Y}$ in $K^{diff}$. 

It is an elementary fact that $n$ solutions $b_{1},..,b_{n}$ of (*) are ${\mathcal C}$-linearly independent iff the $n\times n$ matrix
${\bar b} = (b_{1} ... b_{n})$ is nonsingular.  Hence looking for a fundamental system of solutions of (*) is the same thing as
looking for a solution of the {\em matrix} equation on $GL_{n}$, $\partial Z = AZ$ in $GL_{n}(K^{diff})$. 
Again, with ${\mathcal Z}$ the set of (nonsingular) solutions of $\partial Z = AZ$, the Picard-Vessiot extension  of $K$ for (*)
is generated over $K$ by ${\mathcal Z}(K^{diff})$.

Let us now fix the equation 
\newline
(*)     $\partial Y = AY$  over $K$.

Let us fix ${\bar b}\in {\mathcal Z}(K^{diff})$ as above, and $L = K({\bar b})$.  Let $\phi({\bar x})$ be a formula over $K$ isolating $tp({\bar b}/K)$.  Let ${\mathcal X}$ be the set of solutions of the formula $\phi({\bar x})$, so a $K$-definable subset of ${\mathcal Z}$. There are two differential algebraic incarnations of the (differential) Galois group of the equation (*).  

\begin{Definition} (i) By the intrinsic differential Galois group of (*) , we mean $H^{+} = \{{\bar b}_{1}{\bar b}^{-1}: {\bar b_{1}}$ realizes $\phi({\bar x})$  in ${\mathcal U}\}$  (multiplication in $GL_{n}$)
\newline 
(ii) By the extrinsic differential Galois group of (*), we mean $H_{\bar b} = \{{\bar b}^{-1}{\bar b}_{1}: {\bar b}_{1}$ realises $\phi({\bar x})$ in ${\mathcal U}$\} (Again multiplication in $GL_{n}$). 
\end{Definition} 

\begin{Remark} (i)
The choice of language is influenced by Daniel Bertrand and his book review \cite{Bertrand-Magid} which is itself a nice survey of the Picard-Vessiot theory.  In particular the Zariski closure of $H^{+}$ is 
what Bertrand calls the Katz group. 
\newline
(ii) As stated $H^{+}$ and $H_{{\bar b}}$ are definable subsets (in fact subgroups) of $GL_n({\mathcal U})$. 
\end{Remark}

Let us write $Gal(L/K)$ for the group of automorphisms of the differential field $L$ which fix $K$ pointwise.

\begin{Lemma}(i)   The intrinsic Galois group $H^{+}$  does not depend on the choice of ${\bar b}\in {\mathcal Z}(K^{diff})$. It is a finite-dimensional 
$K$-definable subgroup of $GL_{n}({\mathcal U})$.  $H^{+}$ acts on the left (matrix multiplication) on
 the set ${\mathcal X}$ of solutions of $\phi({\bar x})$ in ${\mathcal U}$. As such ${\mathcal X}$ is a left torsor for $H^{+}$ over $K$.
The map $\rho$ taking $\sigma\in Gal(L/K)$ to $\sigma({\bar b}){\bar b}^{-1}$ establishes an isomorphism between $Gal(L/K)$ and $H^{+}(K^{diff})$.  
Moreover $Gal(L/K)$ acts naturally on ${\mathcal X}(K^{diff})$ and
 $\rho$ establishes an isomorphism between this action and the left torsor $(H^{+}(K^{diff}), {\mathcal X}(K^{diff}))$. 
\newline
(ii) $H_{\bar b}$ is a subgroup of $GL_{n}({\mathcal C})$ defined over $C_{K}$. $H_{b}$ acts by matrix multiplication on ${\mathcal X}$ on the right, 
yielding a right torsor.
The map ${\omega}_{\bar b}$ taking $\sigma\in Gal(L/K)$ to ${\bar b}^{-1}\sigma({\bar b})$ gives an isomorphism 
between $Gal(L/K)$ and $H_{\bar b}(C_{K^{diff}}) = H_{\bar b}(C_{K})$, which depends on $\bar b$.   Composing $
\rho^{-1}$ with $\omega_{\bar b}$ determines an isomorphism between $H^{+}(K^{diff})$ and $H_{\bar b}
(K^{diff})$ defined over $L = K({\bar b})$.
\newline
(iii) When $H^{+}$ is commutative it coincides with $H_{\bar b}$. 
\end{Lemma}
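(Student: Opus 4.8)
The plan is to fix a fundamental matrix $\bar b \in {\mathcal Z}(K^{diff})$ as in the statement, write $L = K(\bar b)$ and $p = tp(\bar b/K)$, and note that $\phi(\bar x)$ isolating $p$ means ${\mathcal X} = \phi({\mathcal U})$ is exactly the set of realizations of $p$ in ${\mathcal U}$. I would first record some elementary facts. Since $\bar b$ satisfies the $K$-definable conditions ``$\det Z \ne 0$, $\partial Z = AZ$'', so does every $\bar b_1 \models \phi$, and a direct computation gives $\partial(\bar b_2^{-1}\bar b_1) = 0$ for any two such; hence $\bar b^{-1}\bar b_1 \in GL_n({\mathcal C})$ whenever $\bar b_1 \models \phi$, which already gives $H_{\bar b} \subseteq GL_n({\mathcal C})$. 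Next, $C_L = C_{K^{diff}} = C_K$, so for $\bar b_1 \in {\mathcal X}(K^{diff})$ we have $\bar b^{-1}\bar b_1 \in GL_n(C_K) \subseteq GL_n(K)$, hence $K(\bar b_1) = K(\bar b) = L$; in particular $L/K$ is normal and ${\mathcal X}(K^{diff}) = \{\sigma(\bar b): \sigma \in Gal(L/K)\}$ (for the nonobvious inclusion, a realization of $p$ inside $K^{diff}$ generates $L$ over $K$, so it is $\sigma(\bar b)$ for the differential $K$-isomorphism $L \to L$ sending $\bar b$ to it). Also $\partial\bar b = A\bar b$ forces $K\langle\bar b\rangle = K(\bar b)$, so $\mathrm{tr.deg}(L/K) \le n^2$ and likewise $\mathrm{tr.deg}(K\langle\bar b_1\rangle/K) \le n^2$ for all $\bar b_1 \models p$. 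Finally --- the device that drives the argument --- $K^{diff}$ is a model of $DCF_0$, which is model complete, so $K^{diff} \preceq {\mathcal U}$; since $H^{+}$, $H_{\bar b}$ and ${\mathcal X}$ are definable over $L \subseteq K^{diff}$, every $L$-sentence about them holds in ${\mathcal U}$ iff it holds in $K^{diff}$.

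For (i): independence of $\bar b$ is a short computation. Any $\bar b' \in {\mathcal Z}(K^{diff})$ has the form $\bar b' = \bar b D$, $D \in GL_n(C_K) \subseteq GL_n(K)$; the $K$-definable map $Y \mapsto YD^{-1}$ sends $p' = tp(\bar b'/K)$ to $p$, so $\{\bar b_1'(\bar b')^{-1}: \bar b_1' \models p'\} = \{(\bar b_1'D^{-1})\bar b^{-1}: \bar b_1'D^{-1}\models p\} = H^{+}$. That $H^{+}$ is a subgroup of $GL_n({\mathcal U})$ reduces, by the elementary-substructure remark, to the same for $H^{+}(K^{diff})$; and $H^{+}(K^{diff}) = \{\sigma(\bar b)\bar b^{-1}: \sigma \in Gal(L/K)\}$ is the image of $\rho\colon \sigma \mapsto \sigma(\bar b)\bar b^{-1}$. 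Writing $\sigma(\bar b) = \bar b c_\sigma$ with $c_\sigma = \bar b^{-1}\sigma(\bar b) \in GL_n(C_K) \subseteq GL_n(K)$, one gets $\sigma\tau(\bar b) = \sigma(\bar b c_\tau) = \bar b c_\sigma c_\tau$, hence $\rho(\sigma\tau) = \bar b c_\sigma c_\tau \bar b^{-1} = \rho(\sigma)\rho(\tau)$; $\rho$ is thus a homomorphism, injective since $\sigma(\bar b) = \bar b$ forces $\sigma = \mathrm{id}_L$, so $H^{+}(K^{diff}) = \rho(Gal(L/K))$ is a group and $\rho$ is an isomorphism. Now $H^{+} = \{\bar b_1\bar b_2^{-1}: \bar b_1,\bar b_2 \in {\mathcal X}\}$ (both inclusions use only that $H^{+}$ is a group), which is $K$-definable since ${\mathcal X}$ is; and $\mathrm{tr.deg}(K\langle\bar b_1\bar b^{-1}\rangle/K) \le \mathrm{tr.deg}(K(\bar b,\bar b_1)/K) \le 2n^2$ gives finite-dimensionality. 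The left-torsor structure is then formal: ${\mathcal X} = H^{+}\bar b$, so $h{\mathcal X} = {\mathcal X}$ for $h \in H^{+}$; the action is free by cancellation in $GL_n$ and transitive since $\bar b_2\bar b_1^{-1} \in H^{+}$. Lastly, for $\bar b_1 \in {\mathcal X}(K^{diff})$, writing $\bar b_1 = \bar b c$ with $c \in C_K \subseteq K$, $\rho(\sigma)\bar b_1 = \sigma(\bar b)\bar b^{-1}\bar b c = \sigma(\bar b c) = \sigma(\bar b_1)$, so $\rho$ intertwines the $Gal(L/K)$-action on ${\mathcal X}(K^{diff})$ with left translation by $H^{+}(K^{diff})$.

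For (ii): $H_{\bar b} \subseteq GL_n({\mathcal C})$ was noted above. The map $\omega_{\bar b}(\sigma) = \bar b^{-1}\sigma(\bar b) = c_\sigma$ takes values in $GL_n(C_L) = GL_n(C_K)$, is a homomorphism ($\omega_{\bar b}(\sigma\tau) = \bar b^{-1}\sigma(\bar b)c_\tau = c_\sigma c_\tau$, as $c_\tau \in K$) and is injective; its image is $\bar b^{-1}{\mathcal X}(K^{diff}) = H_{\bar b} \cap K^{diff}$. Since this image is a group, the elementary-substructure remark again gives that $H_{\bar b}$ is a subgroup of $GL_n({\mathcal U})$, and $\omega_{\bar b}$ is an isomorphism of $Gal(L/K)$ with $H_{\bar b}(C_K) = H_{\bar b} \cap GL_n(C_K) = H_{\bar b} \cap K^{diff}$ (which equals $H_{\bar b}(C_{K^{diff}})$ since $C_{K^{diff}} = C_K$). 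That $H_{\bar b}$ is defined over $C_K$ is the classical statement that the differential Galois group of a linear differential equation over $K$ is a linear algebraic group over $C_K$; it also follows from stable embeddedness of the field of constants together with $C_L = C_K$. The right-torsor structure is formal from ${\mathcal X} = \bar b H_{\bar b}$. Replacing $\bar b$ by $\bar b D$, $D \in GL_n(C_K)$, conjugates $\omega_{\bar b}$ by $D$, so it genuinely depends on $\bar b$. Finally $\omega_{\bar b}\circ\rho^{-1}$ sends $\rho(\sigma) = \sigma(\bar b)\bar b^{-1}$ to $\omega_{\bar b}(\sigma) = \bar b^{-1}\rho(\sigma)\bar b$, i.e.\ it is conjugation by $\bar b$, which extends to an isomorphism $H^{+} \to H_{\bar b}$ of the full groups (indeed $\bar b^{-1}H^{+}\bar b = \{\bar b^{-1}\bar b_1: \bar b_1 \models \phi\} = H_{\bar b}$), evidently defined over $L = K(\bar b)$. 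Part (iii) follows from this relation $H_{\bar b} = \bar b^{-1}H^{+}\bar b$: the two coincide exactly when $\bar b$ normalizes $H^{+}$, and when $H^{+}$ is commutative one can arrange this after replacing $\bar b$ by a suitable $\bar b D$ with $D \in GL_n(C_K)$.

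The main obstacle is the first non-formal assertion in (i) and (ii): that $H^{+}$ and $H_{\bar b}$ really are groups (whence the translation actions on ${\mathcal X}$ are well defined). Checking directly, say, that $\bar b_1 C \models \phi$ for $\bar b_1 \models \phi$ and $C \in H_{\bar b}$ is awkward, because an automorphism of ${\mathcal U}$ over $K$ carrying $\bar b$ to $\bar b_1$ need not fix $C$. Descending to $K^{diff}$-points, where the honest group $Gal(L/K)$ supplies the structure, and then transferring back via $K^{diff} \preceq {\mathcal U}$, is what makes this routine. I also flag that (iii) is sensitive to the choice of $\bar b$: since $H_{\bar b}$ depends on $\bar b$ whereas $H^{+}$ does not, the precise statement is that for commutative $H^{+}$ the fundamental matrix can be chosen so that $H_{\bar b} = H^{+}$.
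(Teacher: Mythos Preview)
The paper itself offers no proof of this lemma beyond ``Well-known, but see \cite{Kamensky-Pillay}'', so there is nothing to compare your argument against at the level of strategy. Your treatment of (i) and (ii) is correct and nicely organized: the reduction to $K^{diff}$-points via $K^{diff}\preceq{\mathcal U}$ is exactly the right device for showing $H^{+}$ and $H_{\bar b}$ are groups, and the remaining verifications (independence of $\bar b$, $K$-definability via $H^{+}=\{\bar b_{1}\bar b_{2}^{-1}:\bar b_{1},\bar b_{2}\in{\mathcal X}\}$, the torsor structures, and the conjugation formula $H_{\bar b}=\bar b^{-1}H^{+}\bar b$) are all sound.

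Your handling of (iii), however, has a genuine gap. You assert that when $H^{+}$ is commutative one can replace $\bar b$ by $\bar b D$ with $D\in GL_{n}(C_{K})$ so that $\bar b D$ normalizes $H^{+}$, but you do not prove this, and the assertion is not obviously true: since $D^{-1}H_{\bar b}D\subseteq GL_{n}({\mathcal C})$ for any such $D$, your claim already presupposes $H^{+}\subseteq GL_{n}({\mathcal C})$, which is exactly what needs to be established. Your instinct that the literal statement is sensitive to $\bar b$ is in fact correct: take $K=\mathbb{C}$, $A=\begin{pmatrix}0&1\\0&0\end{pmatrix}$, $\bar b=\begin{pmatrix}t&1\\1&0\end{pmatrix}$; then $H^{+}$ is the upper unipotent subgroup of $GL_{2}({\mathcal C})$ while $H_{\bar b}$ is the lower unipotent, so $H^{+}\neq H_{\bar b}$ even though $H^{+}$ is commutative. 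What you \emph{can} prove directly from commutativity is that the conjugation isomorphism $h\mapsto\bar b^{-1}h\bar b$ from $H^{+}$ to $H_{\bar b}$ is $Aut_{\partial}({\mathcal U}/K)$-invariant (since for $\sigma$ fixing $K$ one has $\sigma(\bar b)=h_{\sigma}\bar b$ with $h_{\sigma}\in H^{+}$, and commutativity kills the inner automorphism), hence $K$-definable; equivalently, $H^{+}(K^{diff})\subseteq GL_{n}(K)$. This is likely the intended content of ``coincides'', and is what you should prove in place of the unproved normalization claim.
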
 
\begin{proof}  Well-known, but see \cite{Kamensky-Pillay}. 
\end{proof}

\begin{Lemma} Suppose $K$ has a Picard-Vessiot extension $L$ with Galois group $(C_{K},+)^{2}$. Then $K$ has infinitely many $PV$ extensions with Galois group (intrinsic or extrinsic) isomorphic over 
$K$ to $(C_{K},+)$.
\end{Lemma}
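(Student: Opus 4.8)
\emph{Proof plan.} The plan is to realise the required extensions as intermediate differential fields of $L/K$, one for each $1$-dimensional subgroup of the Galois group. Write $G = Gal(L/K)$, so by hypothesis $G$ is isomorphic over $K$ to $(C_{K},+)^{2}$; since $C_{K}$ is algebraically closed it is in particular infinite. First I would note that $L/K$ is a Picard-Vessiot extension whose Galois group is the constant-point group of a linear algebraic group over $C_{K}$, so the classical Picard-Vessiot Galois correspondence applies and no differential-algebraic-group subtleties arise. Next I would list the $1$-dimensional connected algebraic subgroups of $(C_{K},+)^{2}$: in coordinates these are the lines $\ell_{t} = \{(x,tx): x\in C_{K}\}$ for $t\in C_{K}$, together with $\ell_{\infty} = \{(0,y): y\in C_{K}\}$; all are defined over $C_{K}\subseteq K$, and all are normal in $G$ since $G$ is commutative. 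For each such $\ell$ the quotient $G/\ell$ is a $1$-dimensional connected commutative unipotent algebraic group over $C_{K}$, hence isomorphic over $C_{K}$ (a fortiori over $K$) to $(C_{K},+)$; concretely $G/\ell_{t}$ is identified with $(C_{K},+)$ via $(x,y)\mapsto y-tx$, and $G/\ell_{\infty}$ via $(x,y)\mapsto x$.

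I would then invoke the Galois correspondence: to each $\ell$ there corresponds the intermediate differential field $M_{\ell} := L^{\ell}$, with $K\subseteq M_{\ell}\subseteq L$ and $Gal(L/M_{\ell}) = \ell$. Since $\ell$ is a normal closed subgroup of $G$, the standard Picard-Vessiot theory gives that $M_{\ell}/K$ is itself a Picard-Vessiot extension with $Gal(M_{\ell}/K)\cong G/\ell\cong (C_{K},+)$ over $K$; in particular $M_{\ell}$ is a proper extension of $K$. As this Galois group is commutative, its intrinsic and extrinsic incarnations coincide (Lemma 1.9(iii)), so under either convention $M_{\ell}/K$ has Galois group isomorphic over $K$ to $(C_{K},+)$. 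Finally the assignment $\ell\mapsto M_{\ell}$ is injective, since the correspondence recovers $\ell = Gal(L/M_{\ell})$ from $M_{\ell}$; so the infinitely many lines $\ell$ (indexed by $\Pp^{1}(C_{K})$) produce infinitely many pairwise distinct Picard-Vessiot extensions of $K$ of the desired kind.

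I do not anticipate a genuine obstacle. The only non-formal ingredient is the Picard-Vessiot Galois correspondence together with the fact that the sub-extension attached to a normal closed subgroup is again Picard-Vessiot with the quotient as Galois group; this is classical (see e.g.\ \cite{Kamensky-Pillay} and references there) and is especially transparent here because every group in sight is the constant-point group of a linear algebraic group over $C_{K}$. If one prefers to avoid even citing this, a hands-on alternative would use that a Picard-Vessiot extension with Galois group $(C_{K},+)^{2}$ has the form $L = K(u,v)$ with $u',v'\in K$ that are $C_{K}$-linearly independent modulo $\{f' : f\in K\}$ (take the two coordinate projections $G\to (C_{K},+)$), and then set $M_{t} = K(v-tu)$, $M_{\infty} = K(u)$, checking distinctness by comparing transcendence degrees over $K$; but the Galois-correspondence argument is cleaner.
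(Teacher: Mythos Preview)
Your proposal is correct and follows essentially the same route as the paper: the paper also takes, for each nonzero $\alpha\in C_{K}$, the line $G_{\alpha}=\{(x,\alpha x):x\in C_{K}\}$ (your $\ell_{\alpha}$) and sets $L_{\alpha}=L^{G_{\alpha}}$, obtaining infinitely many distinct PV extensions with Galois group $(C_{K},+)$. Your write-up is more detailed (you make the Galois correspondence, the quotient identification, the injectivity, and the intrinsic/extrinsic coincidence explicit), but the underlying argument is the same.
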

\begin{proof} For each nonzero $\alpha\in C_{K}$, the graph of multiplication by $\alpha$ is an algebraic subgroup $G_{\alpha}$ of $(C_{K},+)^{2}$.
Let $L_{\alpha}$ be the elements of $L$ fixed by $G_{\alpha}$. Then the $L_{\alpha}$ are distinct $PV$ extensions of $K$ with Galois group $(C_{K},+)$. 

\end{proof}

\begin{Definition}   Let $K^{PV}$ be the (differential) field extension of $K$ generated over $K$ by all Picard-Vessiot extensions of $K$  (inside our fixed choice of $K^{diff}$). 
\end{Definition} 

\begin{Remark} (i) $K^{PV}$ is the union of all $PV$ extensions of $K$.
\newline
(ii) $K^{diff}$ is also a differential closure of $K^{PV}$.
\newline
(ii) $K^{PV}$ is fixed setwise by ${\mathcal G} = Aut_{\partial}(K^{diff}/K)$  (even by  $Aut_{\partial}({\mathcal U}/K)$).
\end{Remark}
\begin{proof} (i) Let $a\in K^{PV}$ so $a\in dcl(K,b_{1},..,b_{n})$ with $b_{i}\in L_{i}$ a $PV$ extensions of $K$. 
Without loss $L_{i} = K(b_{i})$. But then  $K(b_{1},..,b_{n})$ is also a $PV$ extension of $K$ and contains $a$.
\newline
(ii) is left to the reader, but it is basic model theory and is not really needed.
\newline
(iii) is obvious as each $PV$ extension of $K$ has that property. 
\end{proof} 

$Aut_{\partial}(K^{PV}/K)$ has the structure of  a proaffine algebraic group in (or over) the algebraically closed field $C_{K}$.  
This is the extrinsic proalgebraic differential Galois group, as in Definition 1.7. The structure of this group for base differential fields such as 
$({\mathbb C}(t), d/dt)$ is studied in several papers such as \cite{Bachmayr}.

\begin{Definition} (i) Define inductively  $K^{PV_{1}} = K^{PV}$ and  $K^{PV_{n+1}} = (K^{PV_{n}})^{PV}$.
\newline
(ii) Define $K^{PV_{\infty}} = \cup_{n}K^{PV_{n}}$, and call it the Picard-Vessiot closure of $K$. 
\end{Definition}

Note that in  \cite{MagidI} $K^{PV}$ is called the Picard-Vessiot closure, and in \cite{MagidII} $K^{PV_{\infty}}$ is called the complete Picard-Vessiot closure. 

\begin{Remark} Again each $K^{PV_{n}}$ and thus also $K^{PV_{\infty}}$ are ${\mathcal G}$-invariant, and $K^{diff}$ is a differential closure of $K^{PV_{\infty}}$. 
\end{Remark}

\begin{Lemma} $K^{PV_{\infty}}$ is algebraically closed and Picard-Vessiot closed (in the sense of having no proper Picard-Vessiot extensions).
\end{Lemma}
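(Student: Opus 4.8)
The plan is to prove the two assertions separately, in each case reducing to the observation that a minimal polynomial, or a coefficient matrix, involves only finitely many elements, so that the relevant data already lives over some single $K^{PV_n}$; from there the definition of $(K^{PV_n})^{PV} = K^{PV_{n+1}}$ does the work.

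\emph{Algebraic closedness.} Let $\alpha$ be algebraic over $K^{PV_\infty}$. Since $K^{diff}$ is algebraically closed (being differentially closed) and, by the remarks above, is a differential closure of $K^{PV_\infty}$, we may take $\alpha \in K^{diff}$. The finitely many coefficients of the minimal polynomial of $\alpha$ over $K^{PV_\infty}$ lie in $\bigcup_m K^{PV_m}$, hence all lie in a single $K^{PV_n}$, so $\alpha$ is already algebraic over $K^{PV_n}$. Let $N \subseteq K^{diff}$ be the Galois closure of $K^{PV_n}(\alpha)$ over $K^{PV_n}$, a finite Galois extension. Since $C_{K^{PV_n}} = C_K$ is algebraically closed, $N$ has the same field of constants as $K^{PV_n}$ (the usual computation with a minimal polynomial shows that $C_N$ is algebraic over $C_{K^{PV_n}}$), and a finite Galois extension of a differential field with no new constants is a Picard--Vessiot extension; this is a standard fact of the Picard--Vessiot theory. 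Hence $N \subseteq (K^{PV_n})^{PV} = K^{PV_{n+1}} \subseteq K^{PV_\infty}$, so $\alpha \in K^{PV_\infty}$.

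\emph{Picard--Vessiot closedness.} It suffices to show that every linear differential equation $\partial Y = AY$ with $A$ an $n \times n$ matrix over $K^{PV_\infty}$ already has a fundamental system of solutions inside $K^{PV_\infty}$. The finitely many entries of $A$ all lie in a single $K^{PV_n}$. The solution set $\mathcal{Y}(K^{diff})$ of $\partial Y = AY$ is an $n$-dimensional vector space over $C_{K^{diff}} = C_K$, and, by the description of Picard--Vessiot extensions recalled above, $M := K^{PV_n}\langle \mathcal{Y}(K^{diff}) \rangle$ is the Picard--Vessiot extension of $K^{PV_n}$ attached to this equation; thus $M \subseteq (K^{PV_n})^{PV} = K^{PV_{n+1}} \subseteq K^{PV_\infty}$. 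In particular $\mathcal{Y}(K^{diff}) \subseteq K^{PV_\infty}$, so any ${\bar b} \in GL_n(K^{diff})$ with $\partial {\bar b} = A{\bar b}$ has all of its columns, hence all of its entries, in $K^{PV_\infty}$, giving the required fundamental system in $K^{PV_\infty}$.

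The argument is essentially bookkeeping, and the only real point of care is that all of the auxiliary fields ($N$, $M$, and any putative proper Picard--Vessiot extension) are produced inside the single fixed differential closure $K^{diff}$. This is legitimate because $K^{diff}$ is a differential closure of every $K^{PV_n}$ and of $K^{PV_\infty}$, and because the field of constants is frozen, $C_K = C_{K^{PV_n}} = C_{K^{PV_\infty}} = C_{K^{diff}}$, thanks to the standing hypothesis that $C_K$ is algebraically closed; in particular Picard--Vessiot extensions of $K^{PV_n}$ exist and are realised in $K^{diff}$. The only input not already contained in the excerpt is the classical statement that a finite Galois extension with no new constants is a Picard--Vessiot extension.
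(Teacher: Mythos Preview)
Your proof is correct and follows essentially the same line as the paper's: both use that the data (minimal polynomial coefficients, matrix entries) drops to some $K^{PV_n}$, and both invoke the standard fact that a finite Galois extension with no new constants is Picard--Vessiot. The only organisational difference is that the paper handles algebraic closedness by first observing that any finite Galois extension of $K^{PV_\infty}$ is already a $PV$ extension (since $C_{K^{PV_\infty}}$ is algebraically closed), thereby reducing algebraic closedness to $PV$-closedness in one stroke, whereas you drop to level $n$ for the algebraic closure argument separately; this is a cosmetic variation rather than a different approach.
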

\begin{proof} This is obvious, but we will say a few words.  First note that as $K^{PV_{\infty}}\leq K^{diff}$ and $C_{K} = C_{K^{diff}}$ then the constants of $K^{PV_{\infty}}$ are algebraically closed, whereby any finite Galois extension of $K^{PV_{\infty}}$ is a PV extension (see \cite{Singer-vanderPut}).  So it suffices to prove that $K^{PV_{\infty}}$ has no proper $PV$ extension.
Let $\partial Y= AY$ be a linear DE over $K^{PV_{\infty}}$. Then the entries of $A$ are all in $K^{PV_{m}}$ for some $m$. But then a fundamental matrix for the equation can be found over $K^{PV_{m+1}}$ so over $K^{PV_{\infty}}$. 
\end{proof}

\subsection{Internality} 

\begin{Definition} Let $X$ be a $K$-definable set (or differential algebraic variety defined over $K$). We say that $X$ is {\em internal to the constants} or {\em internal to $\mathcal C$}  if there is a definable (with parameters, not necessarily from $K$) bijection of $X$ with a definable subset $Y$ of ${\mathcal C}^{n}$ some $n$. 
\end{Definition}

Internality to the constants  implies finite-dimensionality.

\begin{Lemma} Let $G$ be a $fdLDAG$ over $K$.  Suppose $G$ is internal to the constants. Then the differential field $L$ generated over $K$ by $G(K^{diff})$ is a Picard-Vessiot extension of $K$ . 
Moreover there is a definable over $L$ isomorphism of $G(K^{diff})$ with an algebraic subgroup $H$ of some $GL_{n}(C_{K})$. 
\end{Lemma}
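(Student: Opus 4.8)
The plan is to exploit the hypothesis that $G$ is internal to the constants to produce a fundamental system of solutions of an associated matrix equation inside the differential field $L$, and then to recognize $L$ as the Picard-Vessiot extension generated by those solutions. Concretely, I would first use internality: there is a definable bijection $f$ of $G$ (or at least of $G(\mathcal U)$) with a definable subset $Y \subseteq \mathcal C^n$, defined over some parameter set, which after extending may be taken to be a tuple from $K^{diff}$. Since $G$ is a $fdLDAG$, it is a $K$-definable subgroup of some $GL_m(\mathcal U)$, and one can view $G$ as acting on itself. The key is that internality to the constants yields a $K$-definable action of $G$ on a finite-dimensional $\mathcal C$-vector space that is \emph{faithful}: via the binding group / liaison group machinery (as in \cite{Pillay-foundational}, \cite{Kamensky-Pillay}), the group $\mathrm{Aut}_\partial(L/K)$ is itself a differential algebraic group internal to $\mathcal C$, hence isomorphic to an algebraic group over $\mathcal C$; and in fact it is isomorphic to $G(K^{diff})$ because $L$ is generated by $G(K^{diff})$ over $K$.

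Next I would make this precise. Let $L = K\langle G(K^{diff})\rangle$. Because $G$ is internal to $\mathcal C$, the extension $L/K$ is \emph{internal to the constants} in the model-theoretic sense: $L$ is generated over $K$ by the solution set (in $K^{diff}$) of a $K$-definable set $X$ internal to $\mathcal C$. A standard fact (see \cite{Kamensky-Pillay}, \cite{Singer-vanderPut}) is that such an extension $L/K$ with $C_L = C_K$ and $C_K$ algebraically closed is exactly a Picard-Vessiot extension: internality to the constants plus the constants being algebraically closed forces $L$ to be the $PV$ extension attached to a suitable linear differential equation $\partial Z = AZ$ over $K$, with $G(K^{diff})$ playing the role of (a group isomorphic to) the differential Galois group. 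One way to see the linear equation explicitly: since $G \leq GL_m$, pick a $K$-point or rather work with the logarithmic derivative — for each $g \in G$, $\partial g \cdot g^{-1}$ lies in the Lie algebra, and internality constrains the whole configuration so that the matrix entries of elements of $G(K^{diff})$ satisfy a linear system over $K$. Then a basis of the $\mathcal C$-span of these entries is a fundamental system, generating exactly $L$.

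For the "moreover" clause: once we know $L/K$ is Picard-Vessiot, Lemma 1.8(ii) (the extrinsic Galois group) gives that $Gal(L/K)$ is isomorphic, via an isomorphism defined over $L$, to an algebraic subgroup of some $GL_n(C_K)$. It remains to identify $Gal(L/K)$ with $G(K^{diff})$. This follows because $L$ is generated over $K$ precisely by $G(K^{diff})$: the Galois action of $Gal(L/K)$ on $G(K^{diff})$ is faithful (an automorphism fixing $G(K^{diff})$ pointwise fixes $L$), and by the binding-group theorem the action is also transitive on the relevant torsor, so $Gal(L/K) \cong G(K^{diff})$ as differential algebraic groups over $K$ (even over $K$, by the uniqueness of the binding group). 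Composing the two isomorphisms, both definable over $L$, yields the desired $L$-definable isomorphism of $G(K^{diff})$ with an algebraic subgroup $H$ of $GL_n(C_K)$.

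The main obstacle, I expect, is the first step: carefully justifying that internality of the group $G$ to $\mathcal C$ upgrades to internality of the field extension $L = K\langle G(K^{diff})\rangle$ to $\mathcal C$, i.e. producing the honest linear differential equation over $K$ whose $PV$ extension is $L$. The subtlety is that the definable bijection witnessing internality of $G$ uses parameters not necessarily in $K$, and one must descend to $K$-definable data; here one uses that $K^{diff}$ is a differential closure (so parameters can be absorbed) together with the fact that $C_K$ is algebraically closed to ensure the resulting binding group is defined over $C_K$ and the extension is genuinely $PV$ rather than merely "generalized $PV$". The rest — applying Lemma 1.8 and the binding-group identification — is then routine.
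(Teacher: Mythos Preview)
Your argument for the ``moreover'' clause contains a genuine error: you claim that $Gal(L/K)\cong G(K^{diff})$, but this is false in general. Take $G = GL_{n}({\mathcal C})$, which is trivially internal to the constants. Then $G(K^{diff}) = GL_{n}(C_{K})\subseteq K$, so $L = K$ and $Gal(L/K)$ is trivial, while $G(K^{diff})$ is not. Your appeal to the binding-group theorem does not repair this: the Galois group $Gal(L/K)$ acts on $G(K^{diff})$ by differential field automorphisms, not regularly on any natural torsor, and there is no reason for this action to be either transitive or to exhibit $G(K^{diff})$ as a principal homogeneous space for $Gal(L/K)$. The isomorphism of $G(K^{diff})$ with a linear algebraic group in $C_{K}$ has to come from somewhere else entirely.

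The paper's route avoids $Gal(L/K)$ altogether for this step. It first invokes an external result (Proposition~4.1 of \cite{Pillay-DGTIV}) to produce a $PV$ extension $L_{1}$ of $K$ and an $L_{1}$-definable isomorphism $\rho:G\to H$ with $H$ an algebraic group in the constants; this immediately gives $G(K^{diff}) = G(L_{1})$, so $L\subseteq L_{1}$, and $L$ is then $PV$ simply as an $Aut_{\partial}(L_{1}/K)$-invariant intermediate field. The isomorphism $\rho$ descends from $L_{1}$ to $L$ on general model-theoretic grounds. Finally --- and this is a point your sketch misses entirely --- showing that $H$ is \emph{linear} (rather than an arbitrary algebraic group in $C_{K}$) requires a separate argument: one rules out a nontrivial quotient of $H$ by an abelian variety using the composition series of $G$ by linear pieces. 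Your first part, the claim that $L$ is $PV$, is roughly in the right spirit but too vague as written: ``internality plus $C_{K}$ algebraically closed forces $PV$'' is not a standard fact without further input (it gives strongly normal, and one still needs linearity of the Galois group), and your logarithmic-derivative suggestion does not actually produce a linear system over $K$ whose $PV$ extension is $L$.
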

\begin{proof} We  may assume that $G$ is connected.  The internality assumption  implies that $G$ is definably isomorphic over some additional parameters to an algebraic group $H$ living in the constants ${\mathcal C}$.
We can find such an isomorphism defined over $K^{diff}$, whereby $H$ is defined over $C_{K}$.  
By Proposition 4.1 of \cite{Pillay-DGTIV}, there is an isomorphism  $\rho$  of $G$ with $H$  defined over a Picard-Vessiot extension $L_{1}$ of $K$.  It follows that  $\rho(G(K^{diff})) = H(C_{K})$ 
whereby $G(K^{diff}) = G(L_{1})$.
Let $L$ be the differential subfield of $L_{1}$ generated by the points of $G(K^{diff})$. Then $L$ is invariant under $Aut_{\partial}(L_{1}/K)$ so is a Picard-Vessiot extension of $K$.
On general model-theoretic grounds $\rho$ can be defined over $L$. 

Finally, we want to show that $G$ being a {\em linear} differential algebraic group, implies that $H$ is  (definably over $C_{K}$) isomorphic to a linear algebraic group in the constants. 
This is because, if $H$ were not linear there would be a $C_{K}$-definable surjective homomorphism from $H$ to an abelian variety $A$ living in the constants.
The same would be then true of $G$.  However the proof of Case (1) of Theorem 1.1 in \cite{Pillay-PV} gives that $G$ has a composition series of definable subgroups where each quotient is definably
isomorphic to a (connected) linear algebraic group living in the constants. But there is no nontrivial algebraic homomorphism from a linear algebraic group to an abelian variety. This gives us
the required contradiction.   (This last argument is close to the material in Section 2.1 of the current paper.)
\end{proof}

\begin{Remark} (i) The first part of the proof above shows that $G(K^{diff})$ generates a $PV$ extension of $K$ when $G$ is any $DAG$ internal to the constants. 
(ii) The converse to 1.17, ,  that $G(K^{diff})$ generating a $PV$ extension of $K$ implies $G$ being internal to the constants, is false, as can be seen by taking $K = K^{diff}$ and suitable $G$. 
\end{Remark}

Let us give an example where $K$ is a field of constants, so very far from being differentially closed, $G$ is a $fdLDAG$ over $K$, $G$ is not internal to the constants, but $G(K^{diff}) = G(K^{PV})$

Recall $dlog$ is the surjective homomorphism from ${\mathcal U}^{\times}$ to $({\mathcal U},+)$, taking $x$ to $(\partial x).x^{-1}$.
Let $G = dlog^{-1}({\mathcal C})$, defined by  $\partial(\partial(x)/x)) = 0$.   So $G$ is a finite-dimensional $\emptyset$-definable connected subgroup of ${\mathcal U}^{\times}$. It is well-known that $G$ is not internal to the constants.  $dlog$ gives rise to a short exact sequence  ${\mathcal C}^{\times} \to G \to ({\mathcal C}, +)$. 
Let $K$ be some algebraically closed field of constants (so $K = C_{K}$).  We try to describe $G(K^{diff})$. Note that $dlog(G(K^{diff}) = (K,+)$. For each $a\in K$, $dlog^{-1}(a)(K^{diff})$ generates a PV extension $L_{a}$ of $K$ with Galois group $K^{\times}$. So $G(K^{diff}) = \cup_{a\in K}G(L_{a}) =  G(K^{PV})$.  One can check that  $\cup_{a}L_{a}$ is not contained in a single PV extension of $K$ (otherwise $G$ would be internal to the constants).

Let us mention  briefly a related example. Let $K$ be an algebraically closed field of constants. Let $c\in K$ be nonzero, and consider the equation $\partial(\partial(x)/x)) = c$. This defines a coset $X$ (in the multiplicative
group) of the group $G$ above.  Let $a$ satisfy the equation (in $K^{diff})$, and let $b = \partial(a)/a$.  Then $\partial(b) = c$ and $K(b)$ is a $PV$ extension $L_{1}$ of $K$ (with Galois group the additive group of the constants).
Moreover  $a$ generates over $L_{1}$ a $PV$ extension $L_{2}$ of $L_{1}$  (with Galois group the multiplicative group of the constants).  We see that the $K$-definable set $X$ determines a complete type over $K$ (i.e. $tp(a/K)$). Note
that $a\notin K^{PV}$, namely $X$ has no point in $K^{PV}$. For otherwise $X$ would be internal to the constants, but then so would be $G$, a contradiction.

\begin{Question} Suppose $K = C_{K}$ is an algebraically closed field of constants, $G$ is a $fdLDAG$ over $K$, and $G(K^{diff})$ generates over $K$ a PV extension of $K$. Must $G$ be internal to the constants. 
\end{Question} 

We briefly recall strongly normal extensions. According to Kolchin \cite{Kolchin}, a differential field extension $L$ of $K$ is strongly normal, if $C_{L} = C_{K}$,  $L$ is finitely generated over $K$ (as a 
differential field) and for a finite tuple $b$ generating $L$ over $K$, whenever $c$ realizes $tp(b/K)$ then $c\in dcl(K,{\mathcal C}, b)$.  This implies that $L$ is even finitely generated as a field, and 
$L\leq K^{diff}$. Moreover $Aut_{\partial}(L/K)$ has the structure of a finite dimensional differential algebraic group (but not necessarily linear). 
And as in the PV case there are two incarnations of this "definable Galois group"; (a) as a $K$-definable group with a $K$-definable action on the set $X$ of realizations of $tp(b/K)$ (for $b$ a generator of $L$ over $K$) isomorphic to the action of $Aut_{\partial}(L/K)$ on $X$, and (b) as an algebraic group in the (algebraically closed field of constants).  These two incarnations are isomorphic, definablly over $b$. 
See also \cite{Kamensky-Pillay} for an account of the strongly normal theory, at least when $K$ is algebraically closed.

A basic result of Kolchin \cite{Kolchin} is:
\begin{Fact} Suppose $L$ is a strongly normal extension of $K$ and the definable Galois group of $L$ over $K$ is definably isomorphic to a fdLDAG (or to a linear algebraic group living in $C_{K}$). Then $L$ is a PV extension of $K$. 
\end{Fact}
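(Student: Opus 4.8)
\noindent\emph{Proof plan.} This is the classical characterization of Picard--Vessiot extensions among strongly normal ones, and the plan is to reconstruct a defining homogeneous linear differential equation over $K$ directly from a faithful linear representation of the Galois group. Write $G = Aut_{\partial}(L/K)$; strong normality gives $L^{G}=K$ and the Galois correspondence between closed subgroups of $G$ and the differential fields between $K$ and $L$. By hypothesis, in its incarnation (b) the group $G$ is (the $C_{K}$-points of) an algebraic group over $C_{K}$ which is moreover \emph{linear}: this is precisely the second form of the hypothesis, and the first form (being definably isomorphic to a $fdLDAG$) forces it, since a $fdLDAG$ --- and hence any algebraic group over $C_{K}$ definably isomorphic to it --- has no surjection onto an abelian variety (cf.\ the proof of Lemma 1.17). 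Fix once and for all a faithful representation $\rho\colon G\hookrightarrow GL_{n}(C_{K})$, and recall that $G$ also acts on $L$ by differential automorphisms fixing $K$ pointwise (and hence $C_{K}$).

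The core step is to show that $\rho$, viewed as a $1$-cocycle of $G$ with values in $GL_{n}(L)$ for the entrywise action of $G$ on $GL_{n}(L)$ --- a cocycle, indeed a homomorphism, since $\rho(\sigma)$ has constant entries and so is $G$-fixed --- is a coboundary: there is a matrix $Z\in GL_{n}(L)$ with $\sigma(Z)=Z\rho(\sigma)$ for all $\sigma\in G$. Equivalently, the semilinear $G$-action $\sigma(v\otimes\ell)=\rho(\sigma)(v)\otimes\sigma(\ell)$ on $W:=C_{K}^{n}\otimes_{C_{K}}L$ descends, i.e.\ $W=W^{G}\otimes_{K}L$ with $\dim_{K}W^{G}=n$. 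One half, that $W^{G}\otimes_{K}L\to W$ is injective, is the independence of the automorphisms in $G$ (together with $L^{G}=K$); the other half is the differential analogue of Hilbert's Theorem 90 (Speiser's lemma) for the strongly normal extension $L/K$. I expect this Hilbert 90 / descent input to be the main point; it is, however, standard in this area (see e.g.\ \cite{Kamensky-Pillay}) and is where algebraic closedness of $C_{K}$ enters.

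Granting such a $Z$, note that $\rho(\sigma)$ has entries in $C_{K}$, so $\partial\rho(\sigma)=0$; differentiating $\sigma(Z)=Z\rho(\sigma)$ and using that each $\sigma\in G$ commutes with $\partial$ gives $\sigma(\partial Z\cdot Z^{-1})=\partial(Z\rho(\sigma))\cdot(Z\rho(\sigma))^{-1}=\partial Z\cdot Z^{-1}$ for every $\sigma\in G$. Hence $A:=\partial Z\cdot Z^{-1}$ has entries in $L^{G}=K$, and $Z$ is a nonsingular solution of $\partial Z=AZ$ over $K$, i.e.\ a fundamental system of solutions of the linear differential equation $\partial Y=AY$ over $K$. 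Consequently the differential field $K\langle Z\rangle$, which lies inside $L$ since $Z\in GL_{n}(L)$, is exactly the Picard--Vessiot extension of $K$ attached to $\partial Y=AY$.

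Finally we identify $K\langle Z\rangle$ with $L$. If $\sigma\in G$ fixes $K\langle Z\rangle$ pointwise then $\sigma(Z)=Z$, whence $\rho(\sigma)=I_{n}$ and $\sigma=\mathrm{id}_{L}$ by faithfulness of $\rho$; thus $Aut_{\partial}(L/K\langle Z\rangle)$ is trivial, and the Galois correspondence for the strongly normal extension $L/K$ forces $K\langle Z\rangle=L$. Therefore $L$ is a Picard--Vessiot extension of $K$, as required. The main obstacle, as indicated, is the descent statement in the second paragraph; everything else is bookkeeping around a faithful representation.
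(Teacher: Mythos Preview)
Your proposal is essentially correct and follows the standard line of argument for this classical result (faithful linear representation, descent to produce a fundamental matrix, then Galois correspondence). Note, however, that the paper gives no proof of its own here: the statement is recorded as a \emph{Fact} attributed to Kolchin \cite{Kolchin}, with nothing beyond the citation. So there is no ``paper's proof'' to compare against; you have supplied an argument where the paper supplies only a reference.

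A brief comment on the one genuine point of substance in your plan. You correctly flag the descent/Hilbert~90 step as the crux: producing $Z\in GL_{n}(L)$ with $\sigma(Z)=Z\rho(\sigma)$ is exactly the triviality of the relevant class in $H^{1}(G,GL_{n}(L))$ for the semilinear action. For strongly normal $L/K$ with $C_{K}$ algebraically closed this is indeed available in Kolchin's framework (it is essentially the statement that the $G$-torsor structure on $L/K$, pushed forward along the faithful representation $\rho$, trivializes over $L$), and your pointer to \cite{Kamensky-Pillay} is apt. If you were to write this up in full, that is the step deserving a precise citation or a self-contained argument; the reduction of the ``$fdLDAG$'' hypothesis to the ``linear algebraic group in $C_{K}$'' hypothesis via the no-abelian-variety-quotient argument (as in the proof of Lemma~1.17) and the final identification $K\langle Z\rangle=L$ via faithfulness and the Galois correspondence are routine.
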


\begin{Lemma} Suppose $X$ is a $K$-definable set which is internal to the constants. Then $X(K^{diff})$ generates over $K$ a strongly normal extension of $K$.
\end{Lemma}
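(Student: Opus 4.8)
Write $L = K\langle X(K^{diff})\rangle$. Recall from Kolchin's definition that $L/K$ is strongly normal precisely when (a) $C_L = C_K$, (b) $L$ is finitely generated over $K$ as a differential field, and (c) for some finite tuple $b$ generating $L$ over $K$, every realization of $tp(b/K)$ lies in $dcl(K,\mathcal C,b)$. Condition (a) is immediate, since $L\subseteq K^{diff}$ forces $C_K\subseteq C_L\subseteq C_{K^{diff}}=C_K$ by the standing Assumption. The plan is to establish (b) and (c) at one stroke by taking for $b$ a \emph{fundamental system} for the internality of $X$ to $\mathcal C$.

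First I would extract from the binding-group machinery for $\omega$-stable theories (cf. \cite{Pillay-foundational}, in the spirit of the proof of Lemma 1.17 above and generalizing the classical Picard-Vessiot picture of Lemma 1.9) the following. Since $X$ is $K$-definable and internal to $\mathcal C$, there is a nonempty $K$-definable set $\mathcal F\subseteq X^{k}$, for a suitable $k$, consisting of ``fundamental systems'': for $\bar b\in\mathcal F$ one has $X(\mathcal U)\subseteq dcl(K,\mathcal C,\bar b)$, and (after the routine elimination-of-imaginaries massaging in the constant field) there is a $K\langle\bar b\rangle$-definable bijection $\theta$ of $X$ onto a definable subset of $\mathcal C^{n}$. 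One may, for instance, take $\mathcal F$ to be the set of $k$-tuples of points of $X$ on which the (finite-dimensional, $K$-definable) binding group of $X$ acts freely; such points exist for $k$ large enough by the descending chain condition on definable subgroups, and having trivial stabiliser is exactly what makes $\bar b$ a fundamental system. Thus $\mathcal F$ is the precise analogue of the $K$-definable torsor of fundamental matrices appearing in Lemma 1.9. As $K^{diff}$ is differentially closed and $\mathcal F$ is nonempty and $K$-definable, I may pick $\bar b\in\mathcal F(K^{diff})\subseteq X(K^{diff})^{k}$.

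Fixing such a $\bar b$, we have $K\langle\bar b\rangle\subseteq L$ trivially. Conversely, for any $c\in X(K^{diff})$ we get $\theta(c)\in\mathcal C^{n}$; and since $\theta$ is $K\langle\bar b\rangle$-definable while $\bar b,c\in K^{diff}$, also $\theta(c)\in dcl(K,\bar b,c)\subseteq K^{diff}$, whence $\theta(c)\in (C_{K^{diff}})^{n}=(C_K)^{n}\subseteq K$; applying $\theta^{-1}$ gives $c\in dcl(K,\bar b)=K\langle\bar b\rangle$. So $X(K^{diff})\subseteq K\langle\bar b\rangle$ and $L=K\langle\bar b\rangle$, proving (b) (and $L$ is finitely generated even as a field, since $X$ is finite-dimensional). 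For (c), take $b:=\bar b$ and let $\bar b'$ realise $tp(\bar b/K)$; membership in $X^{k}$ being expressible over $K$, we get $\bar b'\in X^{k}(\mathcal U)$, and since $\bar b\in\mathcal F$ forces $X(\mathcal U)\subseteq dcl(K,\mathcal C,\bar b)$, each coordinate of $\bar b'$, hence $\bar b'$ itself, lies in $dcl(K,\mathcal C,\bar b)$. This is exactly (c), so $L/K$ is strongly normal.

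The one genuinely substantive point is the input in the second paragraph from the theory of internal definable sets: that fundamental systems for $X$ over $\mathcal C$ can be chosen with coordinates in $X$, that they form a $K$-definable family (so that differential closedness puts one inside $K^{diff}$), and that a fundamental surjection $\mathcal C^{N}\to X$ can be upgraded to an honest $K\langle\bar b\rangle$-definable bijection $\theta$. This is where the definable binding group and elimination of imaginaries in the constant field are really used, and it is the direct generalization of the elementary fact in Lemma 1.9 that the set of fundamental matrices is a $K$-definable set meeting $K^{diff}$. Once that is granted, the rest is formal: the only extra data relating two $K^{diff}$-points of $X$ is a tuple of constants, and the hypothesis $C_{K^{diff}}=C_K$ confines such a tuple to $K$.
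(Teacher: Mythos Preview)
Your proof is correct and follows essentially the same line as the paper's: pick a finite tuple $\bar b$ from $X(K^{diff})$ over which $X$ becomes definably in bijection with a set in the constants, observe that $X(K^{diff})\subseteq K\langle\bar b\rangle$ because $C_{K^{diff}}=C_K$, and then check that $K\langle\bar b\rangle$ is strongly normal directly from the definition. The paper compresses all of this into two sentences (invoking ``general grounds'' for the rechoice of the internality witness inside $X(K^{diff})$), whereas you spell out the binding-group and fundamental-system machinery behind that step; but the argument is the same.
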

\begin{proof} Let $c$ be a finite tuple from $K^{diff}$ such that there is definable over $c$ bijection between $X$ and a $C_{K}$-definable subset of ${\mathcal C}^{n}$  (some $n$).  
On general grounds we  can rechoose $c$ to be a finite tuple of elements of $X(K^{diff})$. Then $c$ generates a strongly normal extension $L$ of $K$ and clearly $X(K^{diff})$ is contained in $L$. 
\end{proof} 

\subsection{Kolchin density}

We recall briefly the Kolchin topology on differential algebraic varieties, as well as the meaning for differential algebraic groups. The reader is referred to \cite{Pillay-foundational}
for more details and background behind the facts stated here. 

We will restrict attention to differential algebraic subsets of the affine spaces ${\mathcal U}^{n}$, namely sets defined by a finite system of differential 
polynomial equations in $n$ differential indeterminates with coefficients from ${\mathcal U}$.  Let $X\subseteq {\mathcal U}^{n}$ be such.
The Kolchin topology on $X$ is that whose closed sets are given by solution sets of finite systems of differential polynomial equations. This is a topology because of the $DCC$ or
Noetherian condition
(no infinite descending chain of closed sets).  It follows that $X$ is a finite union of its (Kolchin) irreducible components, each of which is defined over the algebraic closure of the
(differential) field of definition of $X$. 

An abstract subset $Z$ of $X$ is defined to be Kolchin dense in $X$ if if $X$ is the Kolchin closure of $Z$, namely there is no proper closed subset $Y$ of $X$ with
$Z\subseteq Y$.  Equivalently $Z$ is Kolchin dense in $X$ if $Z$ meets every Kolchin open subset of $X$. 

Let us note in passing that $DCF_{0}$ having quantifier elimination means that every definable subset of $X$ is a
Boolean combination of closed sets, equivalently  finite union of locally closed sets.

We will be concerned with the case when $X$ is a $DAG$ $G$ (in fact just a $LDAG$ suffices).  Assume $G$ defined over $K$. By $\omega$-stability, $G$ has a smallest
definale subgroup of finite index, which we denote by $G^{0}$. Then $G^{0}$ is the (Kolchin) irreducible component of $G$ which contains the identity, and the cosets of 
$G^{0}$ in $G$ are the other irreducible components.

Recall that a definable subset $Y$ of $G$ is said to be generic if finitely many translates of $Y$ cover $G$.  The same notion applies to (definable) torsors $X$ for $G$, namely a definable $Y\subseteq X$ is
generic if finitely many $G$-translates of $Y$ cover $X$. 
A complete type $p(x)\in S_{G}(K)$ is
generic if every formula (definable set) in $p$ is generic.  Likewise for complete types $p(x)\in S_{X}(K)$ where $X$ is a torsor for $G$ over $K$. 

A  basic fact from stable group theory is:
\begin{Fact} Let $\pi:G\to H$ be a definable surjective homomorphism, all defined over $K$ .  Let $Y\subseteq G$ be definable and generic in $G$. Then $\pi(Y)$ is generic
in $H$, and for some generic (in $H$) definable subset $Z$ of $\pi(Y)$, for any $b\in Z$, $\pi^{-1}(b)\cap X$ is generic in the coset $\pi^{-1}(b)$ of $ker(\pi)$. 
\end{Fact}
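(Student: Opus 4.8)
The plan is to deduce the statement from three standard facts about the $\omega$-stable theory $DCF_{0}$: in the $\omega$-stable setting a definable subset of a group, or of a coset of a subgroup $C$, is generic in $C$ if and only if it has full Morley rank (one direction is clear; for the other, a full-rank subset meets some coset of the relevant identity component in full rank, hence generically by connectedness, and translating covers $C$ by finitely many translates); additivity of Morley rank along the quotient map, $RM(G)=RM(N)\oplus RM(H)$ where $N=\ker(\pi)$ and $\oplus$ is the natural sum of ordinals (ordinary addition when the groups are finite-dimensional); and Lascar's inequality $RM(ab/A)\le RM(a/Ab)\oplus RM(b/A)$. Note also that the fibres $\pi^{-1}(b)$, for $b\in H$, are exactly the cosets of $N$.

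First, $\pi(Y)$ is definable, being the image of a definable set under a definable map. If $g_{1}Y,\dots,g_{m}Y$ cover $G$, then $\pi(g_{1})\pi(Y),\dots,\pi(g_{m})\pi(Y)$ cover $H$ by surjectivity of $\pi$, so $\pi(Y)$ is generic in $H$. (The same computation shows that the preimage of a non-generic definable subset of $H$ is non-generic in $G$.)

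Since $Y$ is generic we have $RM(Y)=RM(G)$, so we may choose $a\in Y$ with $RM(a/K)=RM(G)$; put $b=\pi(a)$. The fibre $\pi^{-1}(b)$ is a $Kb$-definable coset of $N$, so $RM(a/Kb)\le RM(N)$, and $RM(b/K)\le RM(H)$. Since $b=\pi(a)\in\mathrm{dcl}(Ka)$, Lascar's inequality gives $RM(N)\oplus RM(H)=RM(G)=RM(a/K)=RM(ab/K)\le RM(a/Kb)\oplus RM(b/K)\le RM(N)\oplus RM(H)$, so by strict monotonicity of the natural sum $RM(a/Kb)=RM(N)$ and $RM(b/K)=RM(H)$. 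Thus $b$ has full Morley rank in $H$ over $K$, and $a$ has full Morley rank in $\pi^{-1}(b)$ over $Kb$; as $a\in\pi^{-1}(b)\cap Y$ and this set is $Kb$-definable and contained in the coset $\pi^{-1}(b)$, it has full rank in $\pi^{-1}(b)$, hence is generic in $\pi^{-1}(b)$.

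Finally let $Z=\{\,b\in\pi(Y):RM(\pi^{-1}(b)\cap Y)=RM(N)\,\}$, equivalently the set of $b\in\pi(Y)$ for which $\pi^{-1}(b)\cap Y$ is generic in the coset $\pi^{-1}(b)$. By definability of Morley rank in the definable family $\{\pi^{-1}(b)\cap Y\}_{b}$, $Z$ is definable, and it is defined over $K$ since all the data are. By construction $Z\subseteq\pi(Y)$, and by the previous paragraph $b=\pi(a)\in Z$ with $RM(b/K)=RM(H)$; since $Z$ is $K$-definable and contains $b$, $RM(Z)\ge RM(b/K)=RM(H)$, so $RM(Z)=RM(H)$ and $Z$ is generic in $H$. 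Hence $Z$ has the required properties. The one step that genuinely uses $\omega$-stability in an essential way is the definability of $Z$ — that ``$\pi^{-1}(b)\cap Y$ is generic in its fibre'' is a first-order condition on $b$, via definability of Morley rank; the rest is the standard rank arithmetic of $\omega$-stable groups, so I expect no real obstacle here.
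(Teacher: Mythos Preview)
The paper does not prove this fact; it is stated without proof as ``a basic fact from stable group theory'' and then invoked as a black box in Claim~4 of the proof of Proposition~2.2. Your argument is the standard route to it, and is correct in outline.

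One minor technical point worth flagging: the Lascar inequality with the natural sum $\oplus$ and the additivity formula $\mathrm{rank}(G)=\mathrm{rank}(N)\oplus\mathrm{rank}(H)$ for a short exact sequence are properly theorems about $U$-rank (Berline--Lascar), not about Morley rank in an arbitrary $\omega$-stable theory; for Morley rank one has only the upper bound with ordinary ordinal addition, which is enough for the inequality but not a priori for your cancellation step. The fix is painless: run the rank computation with $U$-rank instead (genericity in an $\omega$-stable group is equally well characterised by full $U$-rank), and keep your definability-of-Morley-rank argument for the definability of $Z$, since for definable subsets of a fixed coset of $N$ the conditions ``generic'', ``full Morley rank'' and ``full $U$-rank'' all coincide. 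Alternatively, in $DCF_{0}$ Morley rank and $U$-rank agree and the distinction disappears.
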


Finally the connection with the Kolchin topology is:
\begin{Fact} Let $Y$ be a definable subset of the $DAG$ $G$. Then $Y$ is generic if and only if $Y$ contains a nonempty (Kolchin) open subset of $G$.
Likewise for definable subsets of torsors. 
\end{Fact}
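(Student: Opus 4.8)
The plan is to reduce both implications to the connected component $G^{0}$, which is Kolchin-irreducible. Recall from the discussion above that $G^{0}$ is Kolchin-closed of finite index in $G$, hence also Kolchin-open, that the irreducible components of $G$ are exactly the (finitely many) cosets of $G^{0}$, and that $G^{0}$ is normal in $G$ (any conjugate is again a definable subgroup of finite index, hence contains, hence equals $G^{0}$). Also note that left translation by a fixed element of $G$ is a morphism of differential algebraic varieties with morphic inverse, hence a homeomorphism for the Kolchin topology, so a translate of a Kolchin-closed set is Kolchin-closed. The torsor case will then be handled verbatim: if $X$ is a torsor for $G$ over $K$ then the $G^{0}$-orbits on $X$ are its irreducible components (there are finitely many, each is Kolchin-irreducible being the image of $G^{0}$ under an orbit map, and a dimension count shows each is Kolchin-closed, hence clopen), and one simply replaces ``coset of $G^{0}$'' by ``$G^{0}$-orbit'' everywhere below.

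For the ``only if'' direction I would start from a finite covering $g_{1}Y\cup\cdots\cup g_{n}Y=G$ witnessing genericity and intersect it with $G^{0}$. By quantifier elimination each $g_{i}Y$, hence each $g_{i}Y\cap G^{0}$, is a finite union of locally closed sets, so $G^{0}$ is a finite union of sets $U_{j}\cap C_{j}$ with $U_{j}$ Kolchin-open and $C_{j}$ Kolchin-closed in $G^{0}$. Taking Kolchin-closures gives $G^{0}=\bigcup_{j}\overline{U_{j}\cap C_{j}}\subseteq\bigcup_{j}C_{j}$, so by irreducibility $C_{j}=G^{0}$ for some $j$; moreover if every $j$ with $C_{j}=G^{0}$ had $U_{j}=\emptyset$ then $G^{0}$ would be contained in the union of those $C_{j}$ that are proper, again contradicting irreducibility. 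Hence some $U_{j}$ with $C_{j}=G^{0}$ is a nonempty Kolchin-open subset of $G$ contained in one of the $g_{i}Y$, and translating it back by $g_{i}^{-1}$ produces a nonempty Kolchin-open subset of $G$ inside $Y$.

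For the ``if'' direction, suppose $Y\supseteq U$ with $U$ a nonempty Kolchin-open subset of $G$. Then $U$ meets some coset $aG^{0}$, and $V:=a^{-1}U\cap G^{0}$ is a nonempty Kolchin-open subset of $G^{0}$ with $aV\subseteq Y$; the key step is to show $V$ is generic in $G^{0}$. Writing $C:=G^{0}\setminus V$, a proper Kolchin-closed subset of $G^{0}$: if finitely many left translates of $C$ covered $G^{0}$, then, those translates being Kolchin-closed, irreducibility of $G^{0}$ would force $C=G^{0}$, contradicting $V\neq\emptyset$; so $C$ is non-generic. Since no definable group is the union of two non-generic definable subsets (a standard fact of stable group theory, see \cite{Pillay-foundational}), $V$ is generic in $G^{0}$, and a routine translation argument upgrades a finite covering of $G^{0}$ by translates of $V$ to a finite covering of $G$ by translates of $Y$, so $Y$ is generic. (Alternatively one can derive genericity of $V$ from the Morley rank characterisation of generic definable sets in a connected $\omega$-stable group, together with the fact that a proper Kolchin-closed subset of an irreducible set has strictly smaller Morley rank.) The only non-elementary ingredient is this stable group theory input — that a connected group is not covered by two non-generic definable sets — which is exactly what lets one pass from ``contains a nonempty Kolchin-open set'' to ``finitely many translates cover $G$''; everything else is bookkeeping with quantifier elimination, Noetherianity of the Kolchin topology, and the reduction to the connected component (respectively to a single $G^{0}$-orbit).
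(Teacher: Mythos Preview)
The paper does not prove this statement: it is recorded as a \textbf{Fact} (Fact~1.23) with no argument, treated as background from the model theory of $DCF_{0}$ and stable group theory (cf.\ the references to \cite{Pillay-foundational}). So there is no ``paper's own proof'' to compare against.

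Your argument is correct and is essentially the standard one. The ``only if'' direction is exactly how one usually extracts an open piece: cover $G^{0}$ by finitely many translates of $Y$, decompose each into locally closed pieces via quantifier elimination, and use Kolchin-irreducibility of $G^{0}$ to force one of the closed parts to be all of $G^{0}$, leaving a genuine nonempty open piece inside a single translate. Your handling of the degenerate case $U_{j}=\emptyset$ is fine, though you could equally well just discard empty pieces at the outset. The ``if'' direction correctly isolates the one non-elementary input: in a stable (here $\omega$-stable) group, the complement of a non-generic definable set is generic, so a nonempty open $V\subseteq G^{0}$ with proper closed complement must be generic. Your alternative via Morley rank is also valid and is in fact closer to how this is often phrased for $DCF_{0}$.

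One small remark on the torsor case: the assertion that the $G^{0}$-orbits are the Kolchin-irreducible components of $X$ is most cleanly justified by noting that for any $x_{0}\in X$ the orbit map $g\mapsto g\cdot x_{0}$ is a morphism of differential algebraic varieties with morphic inverse (the ``division'' map $X\times X\to G$ coming from regularity of the action), hence a Kolchin homeomorphism $G\to X$; the component structure then transports directly. Your ``dimension count'' aside is unnecessary once you have this.
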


We conclude from Fact 1.23
\begin{Fact} A (abstract) subset $Z$ of $G$ is Kolchin dense in $G$ iff $Z$ meets every generic definable subset of $G$. Likewise for 
abstract subsets of torsors for $G$.
\end{Fact}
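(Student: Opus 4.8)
The plan is to read this off immediately from Fact 1.23 together with the definition of Kolchin density recalled at the start of Subsection 1.4, namely that an abstract subset $Z$ of a differential algebraic variety is Kolchin dense in it exactly when $Z$ meets every nonempty Kolchin open subset. First I would record the (trivial) observation that, by quantifier elimination for $DCF_{0}$, every Kolchin open subset of $G$ is a definable subset of $G$ (possibly with parameters outside $K$), so that the two collections of sets being compared --- ``nonempty Kolchin open subsets of $G$'' and ``generic definable subsets of $G$'' --- are genuinely comparable subfamilies of the definable subsets of $G$, and that genericity of a definable set is an absolute notion independent of the choice of parameters.

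For the direction ``$Z$ Kolchin dense $\Rightarrow$ $Z$ meets every generic definable subset'': given a generic definable $Y \subseteq G$, Fact 1.23 supplies a nonempty Kolchin open $U \subseteq G$ with $U \subseteq Y$; Kolchin density of $Z$ then forces $Z \cap U \neq \emptyset$, hence $Z \cap Y \neq \emptyset$. For the converse: given any nonempty Kolchin open $U \subseteq G$, the key point is that $U$ is itself generic, because $U$ trivially contains a nonempty Kolchin open subset of $G$ (namely $U$), so Fact 1.23 applies in the reverse direction; since $Z$ meets every generic definable set it meets $U$, and as $U$ ranged over all nonempty Kolchin open subsets of $G$, this is exactly Kolchin density of $Z$. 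The torsor case is word-for-word the same, invoking the ``likewise for torsors'' clauses of Fact 1.23 and of the definition of Kolchin density.

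I do not expect a genuine obstacle here. The only point one must not overlook is the mild asymmetry between ``meets every nonempty Kolchin open set'' and ``meets every generic definable set'', which is resolved precisely by the observation that every nonempty Kolchin open subset of $G$ is already generic --- an immediate consequence of Fact 1.23, requiring neither connectedness of $G$ nor any analysis of its component structure via $G^{0}$.
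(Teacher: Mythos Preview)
Your proposal is correct and matches the paper's approach: the paper simply writes ``We conclude from Fact 1.23'' without further argument, and what you have written is exactly the (straightforward) unpacking of that inference in both directions. There is nothing to add.
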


\section{Main results}
We now give proofs of results stated in the introduction.

\subsection{On arbitrary  $LDAG$'s and the proof of Proposition 1.1}

We first prove the finite-dimensional case, part (i) of Proposition 1.1.  By Lemma 1.15 it suffices to prove:


\begin{Proposition}
Suppose $K$ is a differential field which is algebraically closed and $PV$ closed. Let $G$ be a $fdLDAG$ over $K$. Then $G(K^{diff}) = G(K)$. And if $X$ is a 
differential algebraic torsor for $G$ over $K$ then also $X(K^{diff}) = X(K)$.
\end{Proposition}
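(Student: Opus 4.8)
The plan is to reduce quickly to the case that $G$ is connected, and then to induct on the length of a normal series for $G$ supplied by the structure theory of connected $fdLDAG$'s. First I would dispose of the component group: $G^{0}$ is a $K$-definable normal subgroup of finite index, and $X/G^{0}$ — which exists as a $K$-definable set since $DCF_{0}$ eliminates imaginaries — is a torsor for the finite $K$-definable group $G/G^{0}$, hence a finite $K$-definable set; all of its points in ${\mathcal U}$ therefore lie in $acl(K) = K$, so $(X/G^{0})(K^{diff}) = (X/G^{0})(K) \neq \emptyset$. If $\pi_{0}\colon X \to X/G^{0}$ is the quotient map, then for any $a \in X(K^{diff})$ the point $\pi_{0}(a)$ lies in $(X/G^{0})(K)$, and $\pi_{0}^{-1}(\pi_{0}(a))$ is a $K$-definable torsor for $G^{0}$ containing $a$; conversely $X(K) \supseteq \pi_{0}^{-1}(b)(K)$ for any $b \in (X/G^{0})(K)$. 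Hence it is enough to prove the statement for connected $G$ and arbitrary torsors $X$ for it.

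So assume $G$ is connected. By the structure theory used in the proof of Case (1) of Theorem 1.1 of \cite{Pillay-PV}, $G$ admits a normal series $1 = G_{0} \triangleleft G_{1} \triangleleft \cdots \triangleleft G_{n} = G$ of $K$-definable subgroups in which each quotient $G_{i+1}/G_{i}$ is definably isomorphic to a connected linear algebraic group living in the constants. I would induct on $n$. The base case is the situation where $G$ is itself internal to the constants: then by Lemma 1.17 (via Proposition 4.1 of \cite{Pillay-DGTIV}) there is an isomorphism of $G$ with a connected linear algebraic group $H$ defined over $C_{K}$ which is defined over some $PV$ extension of $K$; since $K$ is $PV$-closed that extension is $K$ itself, so we may transport $X$ along this $K$-isomorphism and assume $G = H \subseteq GL_{n}({\mathcal C})$. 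Now if $X(K^{diff}) = \emptyset$ there is nothing to prove; otherwise fix $p \in X(K^{diff})$ and observe that every realization of $tp(p/K)$ has the form $h\cdot p$ for some $h \in H({\mathcal C})$ and so lies in $dcl(K,{\mathcal C},p)$, so that $M := K\langle p\rangle$ is a strongly normal extension of $K$ (this is Lemma 1.21 applied to $X$, which is internal to the constants) whose Galois group, by the torsor structure, embeds into $H(C_{K})$, a linear algebraic group over $C_{K}$. By Fact 1.20, $M/K$ is a $PV$ extension, hence $M = K$, hence $p \in K$. As $p$ was an arbitrary element of $X(K^{diff})$, this proves $X(K^{diff}) = X(K)$ in the base case.

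For the inductive step I would put $N = G_{1}$ and $Q = G/N$: then $N$ is internal to the constants and so is covered by the base case, while $Q$ is connected and carries a normal series of length $n-1$ of the same kind, so the statement holds for $Q$ and all its torsors by the inductive hypothesis. The quotient $X/N$ is a $K$-definable torsor for $Q$, so $(X/N)(K^{diff}) = (X/N)(K)$. Given $a \in X(K^{diff})$, the image $\pi(a)$ under $\pi\colon X \to X/N$ lies in $(X/N)(K)$, so the fibre $\pi^{-1}(\pi(a))$ is a $K$-definable torsor for $N$ having $a$ as a $K^{diff}$-point; by the base case $a \in \pi^{-1}(\pi(a))(K) \subseteq X(K)$. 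This yields $X(K^{diff}) = X(K)$ in general, and the assertion $G(K^{diff}) = G(K)$ is the special case $X = G$ with the left-regular action. The only steps I expect to require real work are the structure theorem for connected $fdLDAG$'s and the descent, using $PV$-closedness, of the isomorphism with an algebraic group over the constants down to $K$; once those are in hand the rest is bookkeeping with quotients and fibres, together with the strongly-normal/Picard-Vessiot dictionary of Section 1.3.
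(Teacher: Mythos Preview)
Your argument is correct and runs parallel to the paper's, but is organized differently in two respects. First, the paper reduces immediately to the group statement $G(K^{diff})=G(K)$, invoking Theorem~1.1 of \cite{Pillay-PV} as a black box to recover the torsor conclusion, and its inductive ``Claim~1'' (the analogue of your fibre argument) again appeals to that theorem to produce a $K$-point in each coset; you instead prove the torsor statement directly, so that the existence of a $K^{diff}$-point in the relevant fibre comes for free from the point $a$ you start with, and no appeal to \cite{Pillay-PV} is needed for that step. Second, the paper splits $G$ into its solvable radical $N$ and semisimple quotient $S$, handling $N$ by a further filtration with additive and multiplicative quotients (the latter via $dlog$) and $S$ via almost-internality to $\mathcal{C}$; you collapse all of this into a single subnormal series with constant-internal quotients and a uniform base case treated through Lemma~1.21 and Fact~1.20. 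Your packaging is a little cleaner, at the cost of leaning more heavily on the strongly-normal machinery.

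One small correction: the series supplied by the structure theory is in general only \emph{subnormal} (the refinements of the solvable radical need not be normal in all of $G$; recall the example $dlog^{-1}(\mathcal{C})$ shows a multiplicative piece need not itself be constant-internal, so a genuine refinement is required), so taking $N=G_{1}$ and forming $G/G_{1}$ is not always legitimate. The fix is trivial: run the induction from the top, setting $N=G_{n-1}\triangleleft G$. Then $G/N$ is constant-internal and is handled by your base case, while $N$ carries the shorter series and is handled by the inductive hypothesis; the fibre argument is unchanged.
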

\begin{proof}
By Theorem 1.1 of \cite{Pillay-PV} it suffices to prove the first part, that $G(K^{diff}) = G(K)$. 
This will be an elaboration on or adaptation of the proof of the finite-dimensional case of Theorem 1.1 in \cite{Pillay-PV} (see p. 813-814).  We will follow notation from there. 

\vspace{2mm}
\noindent
{\em Claim 1.}  Suppose $N$ is a $K$-definable normal subgroup of $G$, $S = G/N$,  $N(K^{diff}) = N(K)$ and $S(K^{diff}) = S(K)$. Then $G(K^{diff}) = G(K)$.
\begin{proof} By our assumptions, each coset of $N(K^{diff})$ in $G(K^{diff})$ is defined over $K$, and is also a torsor for $N$, 
so has a $K$-point (by Theorem 1.1 of \cite{Pillay-PV}). So as  $N(K^{diff}) = N(K)$, also $X(K^{diff}) =  X(K)$ for each coset 
$X(K^{diff})$ of $N(K^{diff})$ in $G(K^{diff})$ and  thus $G(K^{diff}) = G(K)$. 
\end{proof} 

\vspace{2mm}
\noindent
Note that if $G$ is finite then as $K$ is algebraically closed, $G(K^{diff})= G(K)$. So, by Claim 1 we may assume $G$ is connected.
Now $G$ has a connected, definable over $K$, solvable, normal subgroup $N$ and $G/N = S$ is ``semisimple", namely has no infinite definable normal abelian subgroup, and of course $S$ is connected. By Claim 1 it will be enough to prove that $N(K^{diff}) = N(K)$ and $S(K^{diff}) = S(K)$ which we do now.

\vspace{2mm}
\noindent
{\em Claim 2.}  $N(K^{diff}) = N(K)$. 
\begin{proof} As in \cite{Pillay-PV} $N$ is filtered by a chain of normal $K$-definable connected subgroups, such that each successive quotient is a (finite-dimensional) subgroup of $({\mathcal U},+)$ or of ${\mathcal U}^{\times}$.
By Claim 1, it suffices to prove Claim 2 when $N$ is one of these quotients. 
Suppose first $N \leq ({\mathcal U}, +)$. Then $N(K^{diff})$ is a finite-dimensional vector space over $C_{K}$. Then either directly or using Lemma 1.17, $N(K^{diff})$ generates over $K$ a $PV$ extension of $K$, so by our assumptions we get that $N(K^{diff}) = N(K)$.

Suppose now that $N\leq {\mathcal U}^{\times}$.  The logarithmic derivative takes $N$ onto a (finite-dimensional) $K$-definable subgroup of $({\mathcal U},+)$ for which the claim holds. And the kernel is just the multiplicative group of the constants for which the claim also holds. So by Claim 1, $N(K^{diff}) = N(K)$. 

This completes the proof of Claim 2.

\end{proof}

\vspace{2mm}
\noindent
{\em Claim 3.}  $S(K^{diff}) = S(K)$.
\begin{proof}  From the discussion in the proof of Case 1(b) of Theorem 1.1 in \cite{Pillay-PV} we see that $S$ is ``almost internal" to the constants. This means that that over additional parameters $S$ is in the algebraic closure of the constants. By Corollary 3.10 of \cite{Pillay-remarks}, $S$ is internal to the constants. By Lemma 1.17 again it follows that $S(K^{diff})$ generates over $K$ a $PV$ extension of $K$, so by our assumptions, $S(K^{diff}) = S(K)$.
\end{proof}

This completes the proof of Proposition 2.1.

\end{proof}

We now move on to part (ii) of Proposition 1.1. As above it suffices to prove:
\begin{Proposition} Suppose that the differential field $K$ is algebraically closed and $PV$-closed. Let $G$ be a $LDAG$ over $K$. Then
$G(K)$ is Kolchin dense in $G$. 
\end{Proposition}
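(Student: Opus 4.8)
The goal is to show that for $K$ algebraically closed and $PV$-closed, and $G$ an arbitrary $LDAG$ over $K$, the set $G(K)$ is Kolchin dense in $G$. By Fact 1.26 it suffices to show $G(K)$ meets every generic definable subset of $G$; equivalently, every generic type $p \in S_G(K)$ is realized in $G(K)$. So fix a generic type $p$ over $K$ and a realization $a \models p$ inside $K^{\mathrm{diff}}$; I want to show $a$ can be taken in $K$.

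\textbf{First step: reduce to the connected, and then to a structured, case.} Since $G^0$ has finite index in $G$ and the generic types of $G$ are exactly the translates by $G(K)$ of generic types of $G^0$ (the cosets of $G^0$ are $K$-definable because $K$ is algebraically closed), it is enough to treat $G$ connected. Now for a connected $LDAG$ $G$ over $K$, I want a composition series by $K$-definable connected normal subgroups whose successive quotients are either $fdLDAG$'s (handled by Proposition 2.1, where we already know $Q(K^{\mathrm{diff}}) = Q(K)$) or are well-understood ``unipotent/torus-like'' differential algebraic groups coming from the structure theory of $LDAG$'s. The key structural input is Cassidy--Singer's classification: a connected $LDAG$ has a normal series with quotients that are either finite-dimensional, or vector groups (differential algebraic subgroups of a power of $(\mathcal U,+)$), or quasi-simple, etc. The point of a composition series is the following transfer lemma, parallel to Claim 1 of Proposition 2.1 but for density: if $N \trianglelefteq G$ is $K$-definable with $\pi : G \to S = G/N$, and $N(K)$ is Kolchin dense in $N$ and $S(K)$ is Kolchin dense in $S$, then $G(K)$ is Kolchin dense in $G$. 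This follows from Fact 1.24: given a generic definable $Y \subseteq G$, its image $\pi(Y)$ is generic in $S$, so contains a point $\pi(b)$ with $b \in S(K)$ (density in $S$); lift $b$ to $G(K)$ using surjectivity of $\pi$ on $K$-points — here one must know $\pi : G(K) \to S(K)$ is surjective, which again uses that the fibers of $\pi$ over $K$-points are torsors for $N$ over $K$ and hence (since $K$ is $PV$-closed, by Theorem 1.1 of \cite{Pillay-PV}) have $K$-points; then the fiber $\pi^{-1}(\pi(b)) \cap Y$ is, for $\pi(b)$ in a suitable generic subset of $S$, generic in the coset $\pi^{-1}(\pi(b))$, which is a torsor for $N$ over $K$, so it meets $N(K)$-translated density of $N$, giving a $K$-point of $Y$.

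\textbf{Second step: the base cases.} After the reduction it remains to prove Kolchin density of $K$-points in each quotient type. For a $fdLDAG$ quotient this is immediate from Proposition 2.1. The genuinely infinite-dimensional base case is a differential algebraic vector group $V \leq (\mathcal U,+)^m$ defined over $K$, i.e. the solution set of a system of linear homogeneous differential equations. Here I would argue directly: $V$ is a $C_K$-subspace of $(K^{\mathrm{diff}})^m$, it is connected, and $V(K^{\mathrm{diff}})$ generates over $K$ a differential field which is a (possibly infinite iterated) union of $PV$-extensions of $K$ — but since $K$ is $PV$-closed this forces $V(K^{\mathrm{diff}}) = V(K)$, so density is trivial. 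More carefully: any finitely many elements of $V(K^{\mathrm{diff}})$ lie in a finite-dimensional $K$-definable subgroup $V'$ of $V$, and $V'(K^{\mathrm{diff}}) = V'(K)$ by Proposition 2.1, so in fact $V(K^{\mathrm{diff}}) = V(K)$ outright, not merely densely. The same ``approximation by finite-dimensional definable subgroups'' works for any $LDAG$ that is a union of its $K$-definable finite-dimensional subgroups, which covers vector groups and more.

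\textbf{The main obstacle.} The hard part is the structure theory: knowing that a connected $LDAG$ $G$ over $K$ admits a $K$-definable normal series whose successive quotients are handled by the cases above, and in particular dealing with quotients that are \emph{not} unions of finite-dimensional definable subgroups and are not finite-dimensional — the infinite-dimensional ``quasi-simple'' pieces in the Cassidy--Singer classification (e.g. $SL_2$ of a proper differential subfield). For such a piece $S$ I expect the argument must be: $S$ is generated by its $fdLDAG$ subgroups (one-parameter or root subgroups), each of which has all its $K^{\mathrm{diff}}$-points in $K$ by Proposition 2.1, and then since the generic type of $S$ is a product of boundedly many generic types of such subgroups (genericity and stable group theory: a generic element of $S$ is a product of finitely many elements each from a definable generic subset, which can be arranged inside the $fdLDAG$ subgroups), a generic element can be found in $S(K)$. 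I would formalize this via Fact 1.24 applied iteratively, or via the observation that in a connected $LDAG$ every generic definable set, after finitely many translations and intersections, contains a product of generic subsets of finite-dimensional definable subgroups. Getting this ``product of finite-dimensional pieces'' decomposition precise — and in a $K$-definable way — is the technical crux; everything else is the transfer-lemma bookkeeping already rehearsed in the proof of Proposition 2.1.
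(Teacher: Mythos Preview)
Your transfer lemma (the density analogue of Claim 1) and the overall d\'evissage strategy match the paper's Claim 4. The genuine gap is in your ``base case'' for infinite-dimensional vector groups. You claim that any finitely many elements of $V(K^{\mathrm{diff}})$ lie in a finite-dimensional $K$-definable subgroup $V'$ of $V$, and hence $V(K^{\mathrm{diff}}) = V(K)$. This is false already for $V = (\mathcal U,+)$: a finite-dimensional $K$-definable subgroup of $(\mathcal U,+)$ is the solution set of a linear homogeneous ODE over $K$, so an element $a \in K^{\mathrm{diff}}$ lies in such a subgroup if and only if $a$ lies in a $PV$-extension of $K$, i.e.\ (by $PV$-closedness) $a \in K$. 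Your argument therefore proves $K^{\mathrm{diff}} = K$, which is not assumed and need not hold. The parallel claim that $V(K^{\mathrm{diff}})$ generates a union of $PV$-extensions of $K$ fails for the same reason. So for the genuinely infinite-dimensional pieces, Kolchin density of $K$-points is \emph{not} a consequence of Proposition 2.1 via exhaustion by finite-dimensional subgroups; a separate argument is required.

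The paper handles this differently. After the transfer lemma it reduces (via the structure theory in Case (2) of Theorem 1.1 of \cite{Pillay-PV}) to three building blocks: $(\mathcal U,+)$, $\mathcal U^{\times}$, and $\mathcal U$-points of a simple algebraic group over $K$. For the first two it gives a direct density argument (Claim 5): any proper Kolchin-closed subset $Y$ is defined over some finite-order tuple, so every element of $Y$ has bounded order over a suitable subfield $K_1 \subseteq K$; one then exhibits elements of $K$ of arbitrarily large order over $K_1$ (using \cite{Mitschi-Singer} and the primitive element theorem together with $PV$-closedness), forcing $K \not\subseteq Y$. For simple algebraic groups it uses rationality over the algebraically closed field $K$ to reduce to density of $K^n$ in $\mathcal U^n$, handled by the same order argument. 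Your root-subgroup idea for the quasi-simple case runs into the same obstruction: the root subgroups of $G(\mathcal U)$ are copies of $(\mathcal U,+)$, which are not finite-dimensional, so you cannot invoke Proposition 2.1 on them either.
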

\begin{proof}  We start with a Kolchin-dense version of the first claim from the previous proposition.

\vspace{2mm}
\noindent
{\em Claim 4.} Suppose $N$ is a normal subgroup of $G$ definable over $K$ and let $S = G/N$. Suppose that $N(K)$ is Kolchin-dense in $N$, and 
$S(K)$ is Kolchin dense in $S$. Then $G(K)$ is Kolchin dense in $G$.
\newline
\begin{proof} Let $\pi:G\to S$. By Fact 1.24 we have to show that if  $X$ be a generic definable subset of $G$ then  $X(K)\neq \emptyset$. 
By Fact 1.22 there is a generic (in $S$) definable subset $Z$ of $\pi(X)$ such that for any $b\in Z$, $\pi^{-1}(b)\cap X$ is generic in the coset $\pi^{-1}(b)$
of $N$.  As $S(K)$ is assumed to be Kolchin dense in $S$, by Fact 1.24 there is $b\in Z(K)$, whereby $\pi^{-1}(b)$ is a torsor over $K$ for $N$. By assumption,
$N(K)$ is Kolchin dense in $N$. Also by Theorem 1.1 of \cite{Pillay-PV}, $\pi^{-1}(b)$ has a $K$-point, so $\pi^{-1}(b)(K)$ is also Kolchin dense
in $\pi^{-1}(b)$.  By Fact 1.24 again $\pi^{-1}(b)\cap X$ has a $K$-point. In particular $X$ has a $K$-point, which is what we had to prove. 

\end{proof}

We will be using the material around the proof of Case (2) of Theorem 1.1 from \cite{Pillay-PV}. See that paper for appropriate references. 
Note that by Proposition 2.1, Proposition 2.2 holds when $G$ is finite-dimensional.  Hence by Claim 4, it suffices to prove Proposition 2.1 when $G$ is what
we call in \cite{Pillay-PV} $1$-connected, meaning having no proper definable (normal) subgroup $H$ such that $G/H$ is finite-dimensional.
Claim 4, together with the analysis in the proof of Case (2) of Theorem 1.1 in \cite{Pillay-PV}, reduces to the case where $G$ is the additive group
of ${\mathcal U}$, the multiplicative group of ${\mathcal U}$, or the ${\mathcal U}$-points of a simple algebraic group  defined over $K$.

\vspace{2mm}
\noindent
{\em Claim 5.}  If $G$ is either the additive or multiplicative group of ${\mathcal U}$, then $G(K)$ is Kolchin dense in $G$. 
\begin{proof} By Fact 1.24, we have to show that every generic definable subset $X$ of $G$ meets $G(K)$. On general grounds we may assume $X$ 
to be defined over $K^{diff}$. Then $X$ is of the form $G\setminus Y$ where $Y$ is of finite order (say $n$) and defined over a finite tuple ${\bar a}$
from $K^{diff}$.   Let $t\in K$ be such that $t'=1$. Let $K_{0} = {\mathbb Q}^{alg}(t)$ a differential subfield of $K$. Let $C$ be a differential transcendence
basis of $K$ over $K_{0}$, and let $K_{1} = K_{0}\langle C\rangle$.  Notice that the order of ${\bar a}$ over $K_{1}$ is finite, say $m$. Hence for every $d\in Y$ the order
of $d$ over $K_{1}$ is at most $m+n$.  On the other hand, by \cite{Mitschi-Singer}, Kolchin's primitive element theorem \cite{Kolchin}, and our assumptions that $K$ is $PV$-closed,
for each $r$ there is $e\in K$ with the order of $e$ over $K_{0}$ finite and $\geq r$. Then also the order of $e$ over $K_{1}$ is $\geq r$. But then $e\notin Y$,
so $e\in X$, whereby $X(K)$ is nonempty, as required. 
\end{proof}

Now suppose that $G$ is a simple algebraic group over $K$, in particular linear and connected. In that case, $G$ is, as an algebraic variety over $K$, a rational variety, namely
birational over (the algebraically closed field) $K$ to some affine space ${\mathcal U}^{n}$.  (See \cite{Borel}, Remark 14.14.) Hence we are reduced to proving that $K^{n}$ is Kolchin
dense in ${\mathcal U}^{n}$.  This is an adaptation of the proof of Claim 5.  We will consider a special case where the Kolchin open subset $X$ of ${\mathcal U}^{n}$ is given by
a differential polynomial inequation $P(x_{1}, x_{2},..., x_{n-1}, y) \neq 0$ where $P$ is over $K^{diff}$.  (And leave the general case to the reader.) Let $K_{0}$ and $K_{1}$ be as in the proof of Claim 5.
Let $a_{1},..., a_{n-1}\in K_{0}$.  The proof of Claim 5 gives elements $e$ in $K$ of arbitrarily large order over $K_{1}$, and therefore $e$ such that $P(a_{1},..,a_{n-1},e)\neq 0$. 

\vspace{2mm}
\noindent
This completes the proof of Proposition 2.2, and thus of Proposition 1.1.
\end{proof}

We mentioned in the introduction that  Proposition 1.1 (ii) yields an equality of differential Galois cohomology groups at the level of definable cocycles.
Let us be more precise on what is meant.  The proof is obvious given material in \cite{Pillay-Galois}. 
In Subsection 1.1 we gave a precise definition of $H^{1}_{\partial}(Aut_{\partial}(K^{diff}/K), G(K^{diff}))$ for $G$ a $DAG$ over the differential field $K$ (which is also called just 
$H^{1}_{\partial}(K, G)$ following Kolchin). But we can adapt to this to any nice differential field extension of $K$ in place of $K^{diff}$.
In particular we can consider $K^{PV_{\infty}}$.  By a $K$-definable cocycle from ${\mathcal G}_{1} = Aut_{\partial}(K^{PV_{\infty}}/K)$ to $G(K^{PV_{\infty}})$ we mean a crossed
homomorphism (as before) $f$ from ${\mathcal G}_{1}$ to $G(K^{PV_{\infty}})$ such that for some $K$-definable function $h(-,-)$ and ${\bar a}$ from 
$K^{PV_{\infty}}$ we have that $f(\sigma) = h({\bar a}, \sigma({\bar a}))$ for all $\sigma \in {\mathcal G}_{1}$. Two such cocycles $f, g$ are cohomologous if for
some $b\in G(K^{PV_{\infty}})$, $f(\sigma) = b^{-1}g(\sigma)\sigma(b)$ for all $\sigma\in {\mathcal G}_{1}$. $H^{1}_{\partial}(Aut_{\partial}(K^{diff}/K), G(K^{diff}))$
is the collection of equivalence classes.  Let us first note that on general model-theoretic grounds, every automorphism of $K^{PV_{\infty}}$ over $K$ extends to an automorphism of 
$K^{diff}$ over $K$. 
\begin{Proposition} 
Let $G$ be a $fdLDAG$ over $K$, and let $f$ be a $K$-definable cocycle from $Aut_{\partial}(K^{diff}/K)$ to $G(K^{diff})$. Then 
\newline
(i) for any $\sigma\in Aut_{\partial}(K^{diff}/K)$, $f(\sigma)$ depends only on $\sigma|K^{PV_{\infty}}\in Aut_{\partial}(K^{PV_{\infty}}/K)$.
\newline
(ii) There is a $K$-definable function $h(-,-)$ and tuple ${\bar a}$ from $K^{PV_{\infty}}$ such that $f(\sigma) = h({\bar a}, \sigma({\bar a}))$ for all 
$\sigma\in {\mathcal G}$ (or equivalently ${\mathcal G}_{1}$.

\end{Proposition}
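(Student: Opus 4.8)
The plan is to deduce both parts from Proposition 1.1(i) together with the cocycle definition. First recall that a $K$-definable cocycle $f$ from $\mathcal{G} = Aut_\partial(K^{diff}/K)$ to $G(K^{diff})$ comes with data: a $K$-definable function $h(-,-)$ and a tuple $\bar{c}$ from $K^{diff}$ such that $f(\sigma) = h(\bar{c}, \sigma(\bar{c}))$ for all $\sigma \in \mathcal{G}$. The values of $f$ lie in $G(K^{diff})$, and by Proposition 1.1(i) we have $G(K^{diff}) = G(K^{PV_\infty})$. So $f(\sigma) \in G(K^{PV_\infty})$ for every $\sigma$; this is the first observation, but by itself it does not yet give (ii), since a priori the element $h(\bar c, \sigma(\bar c))$ could depend on more of $\sigma$ than its restriction to $K^{PV_\infty}$, and the witnessing tuple $\bar c$ might not lie in $K^{PV_\infty}$.

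For part (i), I would argue as follows. Fix $\sigma \in \mathcal{G}$ and suppose $\tau \in \mathcal{G}$ agrees with $\sigma$ on $K^{PV_\infty}$. I want $f(\sigma) = f(\tau)$. Since $f$ is a crossed homomorphism, $f(\sigma^{-1}\tau) = f(\sigma^{-1})\,\sigma^{-1}(f(\tau))$ and $f(\sigma^{-1}) = \sigma^{-1}(f(\sigma))^{-1}$ (from $f(\mathrm{id}) = e$ and the cocycle identity applied to $\sigma\sigma^{-1}$); so it suffices to show $f(\rho) = e$ whenever $\rho \in \mathcal{G}_2 := Aut_\partial(K^{diff}/K^{PV_\infty})$. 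Now for $\rho \in \mathcal{G}_2$, $f(\rho) = h(\bar c, \rho(\bar c)) \in G(K^{diff}) = G(K^{PV_\infty})$ by Proposition 1.1(i), so $f(\rho)$ is fixed by $\mathcal{G}_2$. On the other hand, the restriction of $f$ to $\mathcal{G}_2$ is itself a $K^{PV_\infty}$-definable cocycle from $\mathcal{G}_2$ to $G(K^{diff})$ (with the same $h$ and $\bar c$, now viewing $\bar c$ over $K^{PV_\infty}$), and since $K^{diff}$ is also a differential closure of $K^{PV_\infty}$, this cocycle takes values in $G((K^{PV_\infty})^{diff}) = G(K^{PV_\infty})$, on which $\mathcal{G}_2$ acts trivially. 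Hence $f|_{\mathcal{G}_2}$ is a homomorphism $\mathcal{G}_2 \to G(K^{PV_\infty})$; but $\mathcal{G}_2$ has no proper closed (or even definable of finite index / relevant quotient) structure mapping onto the relevant differential Galois groups, so the restriction is trivial — more concretely, $f|_{\mathcal{G}_2}$ being a $K^{PV_\infty}$-definable cocycle valued in a group with trivial $\mathcal{G}_2$-action means it is a $K^{PV_\infty}$-definable homomorphism, and composing with a faithful embedding of $G$ into $GL_n$ and using that the image of $\mathcal{G}_2$ in any fdLDAG over $K^{PV_\infty}$ realized inside $K^{diff}$ is trivial (because $G(K^{diff}) = G(K^{PV_\infty})$ forces the associated torsor to have a $K^{PV_\infty}$-point, by Theorem 1.1 of \cite{Pillay-PV} applied over $K^{PV_\infty}$), we conclude $f(\rho) = e$ for all $\rho \in \mathcal{G}_2$. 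This proves (i).

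For part (ii), given (i) the cocycle $f$ factors through the restriction map $\mathcal{G} \to \mathcal{G}_1 = Aut_\partial(K^{PV_\infty}/K)$, which is surjective by the remark just before the Proposition that every automorphism of $K^{PV_\infty}$ over $K$ extends to $K^{diff}$. So $f$ induces a well-defined $K$-definable cocycle $\bar f : \mathcal{G}_1 \to G(K^{PV_\infty})$. The content of (ii) is that the witnessing tuple can be taken inside $K^{PV_\infty}$: by Fact 1.3, $f$ (equivalently $\bar f$) corresponds to a differential algebraic right torsor $Y$ for $G$ over $K$, and by Proposition 1.1(i) $Y(K^{diff}) = Y(K^{PV_\infty})$; picking $\bar a \in Y(K^{PV_\infty})$ and letting $h$ be the $K$-definable function sending $(y_1, y_2) \in Y \times Y$ to the unique $g \in G$ with $y_1 \cdot g = y_2$ (well-defined since $Y$ is a torsor), we get $f(\sigma) = h(\bar a, \sigma(\bar a))$ with $\bar a$ from $K^{PV_\infty}$, as required.

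The main obstacle I anticipate is the last step of part (i): making precise that a $K^{PV_\infty}$-definable cocycle from $\mathcal{G}_2$ into a group on which $\mathcal{G}_2$ acts trivially must vanish. The cleanest route is probably to invoke directly that, since $K^{PV_\infty}$ is $PV$-closed and algebraically closed (Lemma 1.19), Theorem 1.1 of \cite{Pillay-PV} gives $H^1_\partial(K^{PV_\infty}, G) = 0$ for every $LDAG$ $G$ over $K^{PV_\infty}$, so every such cocycle is cohomologous to the trivial one; combined with the triviality of the $\mathcal{G}_2$-action (which collapses "cohomologous" to "equal" in the relevant commutative reductions, or more carefully forces the coboundary to be constant and hence the cocycle to be a genuine trivial homomorphism), this yields $f|_{\mathcal{G}_2} \equiv e$. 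I would want to double-check that the non-commutative case genuinely reduces this way rather than just giving "cohomologous to trivial" without "equal to trivial" — but since the action is trivial, a cocycle $f$ with $f = b^{-1}\sigma(b) = b^{-1}b = e$ for the coboundary, the identification is immediate.
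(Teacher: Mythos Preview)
Your argument is essentially correct, but it is more circuitous than what the paper has in mind. The paper gives no detailed proof; it simply says the statement is ``obvious given material in \cite{Pillay-Galois}.'' The intended one-line argument is this: by the results of \cite{Pillay-Galois}, any $K$-definable cocycle $f$ is \emph{literally} of the form $f(\sigma)=$ the unique $g\in G$ with $a\cdot g=\sigma(a)$, for some point $a$ of a $K$-definable right $G$-torsor $X$ (not merely cohomologous to such a thing). Proposition~1.1(i) then gives $a\in X(K^{diff})=X(K^{PV_\infty})$, which is exactly (ii); and (i) is an immediate consequence of (ii), since $f(\sigma)$ now visibly depends only on $\sigma(a)$, hence only on $\sigma|K^{PV_\infty}$.

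Your treatment differs in two ways. First, you prove (i) independently by restricting $f$ to $\mathcal G_2=Aut_\partial(K^{diff}/K^{PV_\infty})$ and arguing that this restriction vanishes: it is a $K^{PV_\infty}$-definable cocycle, hence a coboundary by Theorem~1.1 of \cite{Pillay-PV} (using Lemma~1.15 that $K^{PV_\infty}$ is algebraically closed and $PV$-closed), and since $G(K^{diff})=G(K^{PV_\infty})$ the coboundary $b^{-1}\rho(b)$ is identically $e$. This is correct, but unnecessary once you have (ii). Second, in your proof of (ii) you invoke Fact~1.5, which only matches cohomology \emph{classes} with isomorphism classes of torsors; so picking an arbitrary $\bar a\in Y(K^{PV_\infty})$ yields a cocycle cohomologous to $f$, not $f$ itself. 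The fix is easy and you should state it: if $f'(\sigma)=h(\bar a,\sigma(\bar a))$ and $f(\sigma)=b^{-1}f'(\sigma)\sigma(b)$ with $b\in G(K^{diff})=G(K^{PV_\infty})$, then $\bar a'=\bar a\cdot b\in Y(K^{PV_\infty})$ satisfies $f(\sigma)=h(\bar a',\sigma(\bar a'))$. Alternatively, appeal directly to the sharper statement from \cite{Pillay-Galois} that every definable cocycle (not just every class) arises from a point of a torsor.
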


\subsection{On fdLDAG's internal to the constants}
We want to find some tighter connections between torsors for $fdLDAG$'s $G$ over $K$ and $PV$ extensions, in the case that $G$ is internal to the constants.  This section was partly motivated by exploring  differential field versions of Serre's theorem referred to in the introduction.  We prove Propositions 1.2 and 1.3 from the introduction, which we will restate.

\begin{Proposition}  Let  $\partial Y_{1} =A_{1}Y_{1}$, and $\partial Y_{2} = A_{2}Y_{2}$ be linear differential equations over $K$, where $Y_{1}$, $Y_{2}$ are  column vectors of indeterminates of length $n_{1}$, $n_{2}$.
Let  $G_{i} < GL_{n_{i}}(K^{diff})$ be the intrinsic differential Galois group of the equation $\partial Y_{i} = A_{i}Y_{i}$
and let $(G_{i}, X_{i})$ be the corresponding torsor defined over $K$, for i=1,2., as in 1.9 (i).  Let $L_{1}, L_{2}$ be the corresponding  Picard-Vessiot extensions of $K$.
\newline
Then $L_{1} = L_{2}$ if and only if  $(G_{1}, X_{1})$, and $(G_{2}, X_{2})$ are isomorphic over $K$ as (differential algebraic) torsors in the sense of Definition 1.4 (i). 
\end{Proposition}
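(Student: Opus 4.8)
The plan is to prove both directions by translating between the Picard-Vessiot extension and its associated intrinsic torsor, using Lemma 1.9. Recall that for the equation $\partial Y_i = A_i Y_i$, fixing a fundamental matrix $\bar b_i \in \mathcal{Z}_i(K^{diff})$ gives $L_i = K(\bar b_i)$, the formula $\phi_i(\bar x)$ isolates $tp(\bar b_i/K)$, the solution set $\mathcal{X}_i$ of $\phi_i$ is a left torsor for $G_i = H_i^+$ over $K$, and by Lemma 1.9(i) the map $\rho_i: \sigma \mapsto \sigma(\bar b_i)\bar b_i^{-1}$ is an isomorphism $Gal(L_i/K) \cong G_i(K^{diff})$ compatible with the actions on $\mathcal{X}_i(K^{diff})$.

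For the direction ($\Leftarrow$): suppose $(i,j): (G_1,X_1) \to (G_2,X_2)$ is an isomorphism of torsors over $K$ in the sense of Definition 1.4(i), where I identify $X_\ell$ with $\mathcal{X}_\ell$. Pick $\bar b_1 \in \mathcal{X}_1(K^{diff})$ (so $L_1 = K(\bar b_1)$); then $j(\bar b_1) \in \mathcal{X}_2(K^{diff})$ since $j$ is $K$-definable, and $j(\bar b_1)$ realizes $\phi_2$, so $K(j(\bar b_1))$ is a (the) Picard-Vessiot extension $L_2$. Because $j$ and its inverse are $K$-definable bijections, $\bar b_1 \in dcl(K, j(\bar b_1))$ and $j(\bar b_1) \in dcl(K, \bar b_1)$, so $K(\bar b_1) = K(j(\bar b_1))$, i.e. $L_1 = L_2$ (as subfields of $K^{diff}$; here I use that a $PV$ extension is generated as a differential field by any single realization of the isolating type, and that $dcl$ in $DCF_0$ is differential-field-theoretic closure).

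For the direction ($\Rightarrow$): suppose $L_1 = L_2 =: L$ inside $K^{diff}$. Then $Gal(L/K)$ acts on $\mathcal{X}_1(K^{diff})$ and on $\mathcal{X}_2(K^{diff})$, and via $\rho_1, \rho_2$ these actions are isomorphic to the left-multiplication actions of $G_1(K^{diff})$, $G_2(K^{diff})$ on themselves. Compose: $i := \rho_2 \circ \rho_1^{-1}: G_1(K^{diff}) \to G_2(K^{diff})$ is a group isomorphism. The issue is that a priori $\rho_2 \circ \rho_1^{-1}$ is only an isomorphism of the groups of $K^{diff}$-points and is defined over $L$, not over $K$; I need a $K$-definable isomorphism of the differential algebraic groups $G_1, G_2$ (and a matching $j$). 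The standard move, and the step I expect to be the main obstacle, is a Galois-descent / Weil-style argument: $G_1$ and $G_2$ are $fdLDAG$s over $K$, the map $i$ is defined over $L$, and one checks it is $Gal(L/K)$-equivariant in the appropriate sense (using that $\rho_1,\rho_2$ are cocycle-type maps and $L/K$ is a $PV$, hence "Galois", extension with all the relevant points rational over $L$); equivariance forces $i$ to be fixed by $Gal(L/K)$, hence defined over $K$ since $L/K$ is strongly normal and $K$ is relatively algebraically closed in $L$ (constants already in $K$). Concretely: $\bar b_1$ and $\bar b_2$ both generate $L$ over $K$, so there is a $K$-definable bijection $j_0: \mathcal{X}_1 \to \mathcal{X}_2$ with $j_0(\bar b_1) = \bar b_2$ (as $tp(\bar b_1/K)$ and $tp(\bar b_2/K)$ are interdefinable over $K$), and one verifies $j_0(g \cdot x) = i(g) \cdot j_0(x)$ first at $x = \bar b_1$ for all $g$ — this is exactly the compatibility of $\rho_1, \rho_2$ with the actions — and then everywhere by homogeneity and transitivity of the $G_i$-action, after checking $i$ is itself $K$-definable by a similar interdefinability argument applied to the graph of $i$ inside $G_1 \times G_2$. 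Finally one notes $i$ is a group isomorphism because both $\rho_\ell$ intertwine the $Gal(L/K)$-composition with multiplication in $G_\ell$. This gives the pair $(i, j_0)$ witnessing the isomorphism of torsors over $K$, completing the proof.
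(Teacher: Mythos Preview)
Your proposal is correct and follows essentially the same route as the paper: both directions use interdefinability of $\bar b_1$ and $\bar b_2$ over $K$, the $(\Leftarrow)$ argument is identical, and for $(\Rightarrow)$ both construct the $K$-definable bijection $j$ from the isolated type $tp(\bar b_1,\bar b_2/K)$ and set $i = \rho_2\circ\rho_1^{-1}$.

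The one place where the paper is sharper is exactly the step you flag as the main obstacle, the $K$-definability of $i$. Rather than appealing to a Galois-descent/equivariance argument, the paper writes down the formula
\[
i(g) \;=\; j(g\bar b_1)\,\bigl(j(\bar b_1)\bigr)^{-1}
\]
(using that $j$ is $Gal(L/K)$-invariant), observes this defines $i$ over $\bar b_1$, and then notes that the same formula holds with any element of $X_1$ in place of $\bar b_1$, so $i$ is in fact $K$-definable. This bypasses any need for elimination of imaginaries or a fixed-field argument and makes the compatibility $j(gx)=i(g)j(x)$ immediate. Your descent sketch would also go through (since $\rho_1,\rho_2$ are independent of the chosen fundamental matrices by Lemma~1.9(i), $i$ is $Gal(L/K)$-invariant and its canonical parameter lies in $K$), but the explicit formula is cleaner.
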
 
\begin{proof}  Suppose that the right hand side holds. So there are ${\bar b}_{1}\in X_{1}$ and ${\bar b}_{2}\in X_{2}$ which are interdefinable over $K$, so the differential field generated by $K$ and ${\bar b}_{1}$ is equal to the differential field generated by ${\bar b}_{2}$. But these are precisely  $L_{1}$ and $L_{2}$, so we get the left hand side.

Now suppose that $L_{1} = L_{2} = L$ say. Let again ${\bar b}_{1}\in X_{1}$ and ${\bar b}_{2}\in X_{2}$.  So ${\bar 
b}_{1}$ and ${\bar b}_{2}$ are interdefinable over $K$ (even in the field language). As $X_{1}$ is the set of realizations 
of $tp({\bar b}_{1}/K)$  (in $K^{diff}$) and $X_{2}$ the set of realizations of $tp({\bar b}_{2}/K)$ (in $K^{diff}$), we 
obtain a definable over $K$  bijection $j:X_{1} \to X_{2}$ the graph of which is precisely the set of realizations of $tp({\bar 
b}_{1}, {\bar b}_{2}/K)$ (in $K^{diff}$).  In particular $j$ is invariant under $Aut(L/K)$. 

Now from 1.9 (i),  we have an isomorphism $\rho_{1}: Aut(L/K) \to  G_{1}$, where $\rho_{1}(\sigma) = \sigma({\bar b}_{1}){\bar b}_{1}^{-1}$   (which does not depend on the choice of ${\bar b}_{1}\in X_{1}$). And similarly $\rho_{2}:Aut(L/K) \to G_{2}$ with analogous properties. 

So $i = \rho_{2}\circ \rho_{1}^{-1}$ is an isomorphism (of abstract groups) between $G_{1}$ and $G_{2}$.

\vspace{2mm}
\noindent
{\bf Claim I.}  $i$ is definable over $K$.
\newline
{\em Proof of Claim I.} 
We first show that $i$ is definable over ${\bar b}_{1}$.  Given $g\in G_{1}$, let $\rho_{1}(\sigma) = g$, so $\sigma({\bar b}_{1}) = g{\bar b}_{1}$.  But as $j$ is invariant under $\sigma$, $\sigma({\bar b}_{2}) = j(g{\bar b}_{1})$ which also equals $\rho_{2}(\sigma){\bar b}_{2}$.   The end result is that $i(g) = j(g({\bar b}_{1})(j({\bar b}_{1}))^{-1}$, so $i$ is definable over ${\bar b}_{1}$.   
But the same computation and formula work for any element of $X_{1}$ in place of ${\bar b}_{1}$.  This proves Claim I.
\qed

\vspace{2mm}
\noindent
{\bf Claim II.}  The pair $(i,j)$ gives an isomorphism over $K$ of the differential algebraic torsors $(G_{1}, X_{1})$ and $(G_{2}, X_{2})$.
\newline
{\em Proof of Claim II.}   This is immediate from the definition of $i$ and $j$, as we know that $j$ is an isomorphism between $X_{1}$ and $X_{2}$ as abstract torsors for $Aut(L/K)$. 
\qed

\end{proof}

A natural equivalence relation on the collection of linear DE's over $K$, $\partial Y = AY$ where $Y$ is a column vector of unknowns (of varying size), 
is that two such linear DE's define the same $PV$ extension of $K$. As we saw, each such an equation gives rise to a left torsor $(G,X)$.
 So the proposition says that fixing a $fdLDAG$, $G$,  over $K$, the collection of equivalence classes of linear DE's over $K$ 
with intrinsic Galois group isomorphic over $K$ to $G$ embeds in the collection of isomorphism classes of left torsors $(G,X)$ over $K$ 
 (which recall is not the same as $H^{1}_{\partial}(K,G)$).
What about in the other direction?

The following is basically an adaptation of the proof of the ``less trivial" direction of Serre's theorem Proposition 8 in \cite{Serre}, to the differential context. 

\begin{Proposition} Let $X$ be a right torsor for a $fdLDAG$ $G$, all over $K$, where $G$ is internal to the constants.  
Let $K_{G}$ be the differential field generated over $K$ by $G(K^{diff})$ and $K_{X}$ the differential field generated over $K$ by $X(K^{diff})$. 
Then $K\leq K_{G} \leq K_{X}$, $K_{X}$ is a PV extension of $K$, so also a PV extension of $K_{G}$. Moreover, the differential Galois group of $K_{X}$ over $K_{G}$ (intrinsic or extrinsic) definably, 
over $K_{X}$, embeds in $G$. 
\end{Proposition}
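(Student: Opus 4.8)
The plan is to exploit the fact, established in Lemma 1.17, that since $G$ is a $fdLDAG$ internal to the constants, the differential field $K_G$ generated over $K$ by $G(K^{diff})$ is a Picard–Vessiot extension of $K$, and moreover there is a $K_G$-definable isomorphism $\rho$ of $G$ (i.e. of $G(K^{diff})$) with an algebraic subgroup $H$ of some $GL_n(C_K)$. First I would note the inclusions: $K \leq K_G$ is immediate, and $K_G \leq K_X$ because, $X$ being a torsor for $G$ over $K$, once one fixes a point $x_0 \in X(K^{diff})$ (which exists since $X$ is Kolchin-dense-nonempty over $K^{diff}$, as $K^{diff}$ is differentially closed) the map $g \mapsto x_0 * g$ is a $K\langle x_0\rangle$-definable bijection $G \to X$, so every element of $G(K^{diff})$ is interdefinable over $K\langle x_0 \rangle \subseteq K_X$ with an element of $X(K^{diff})$; since $X(K^{diff}) \subseteq K_X$ and $x_0 \in K_X$, this forces $G(K^{diff}) \subseteq K_X$, hence $K_G \subseteq K_X$.

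Next I would show $K_X$ is a PV extension of $K$. Since $X$ is internal to the constants (being in $K$-definable bijection with $G$ over any point of $X$, and $G$ internal to the constants), Lemma 1.20 gives that $X(K^{diff})$ generates a strongly normal extension $K_X$ of $K$. To upgrade ``strongly normal'' to ``PV'', by Fact 1.19 it suffices to check that the definable Galois group $\mathrm{Aut}_\partial(K_X/K)$ is (definably) isomorphic to a $fdLDAG$ or to a linear algebraic group in $C_K$. Here I would use the standard description of the Galois action: fixing $x_0 \in X(K^{diff})$, any $\sigma \in \mathrm{Aut}_\partial(K_X/K)$ sends $x_0$ to another point of $X(K^{diff})$, so $\sigma(x_0) = x_0 * c(\sigma)$ for a unique $c(\sigma) \in G(K^{diff})$, and a cocycle computation shows $\sigma \mapsto c(\sigma)$ is an embedding of $\mathrm{Aut}_\partial(K_X/K)$ into $G(K^{diff})$ as abstract groups, definably over $x_0$; composing with $\rho$ realizes this group inside $H(C_K) \leq GL_n(C_K)$, a linear algebraic group over the algebraically closed constant field. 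Fact 1.19 then yields that $K_X$ is a PV extension of $K$, and since $K_G$ is intermediate and itself PV, $K_X$ is also a PV extension of $K_G$.

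Finally, for the statement about $\mathrm{Gal}(K_X/K_G)$, I would restrict the above cocycle description to automorphisms fixing $K_G$. An element $\sigma \in \mathrm{Aut}_\partial(K_X/K_G)$ still satisfies $\sigma(x_0) = x_0 * c(\sigma)$ with $c(\sigma) \in G(K^{diff})$, and $\sigma \mapsto c(\sigma)$ is a homomorphic embedding of $\mathrm{Aut}_\partial(K_X/K_G)$ into $G$; the point is that over $K_G$ the derivation-automorphism action on $G(K^{diff})$ is already ``trivialized'' via $\rho$ (recall $\rho(G(K^{diff})) = H(C_K)$ is pointwise fixed over $C_K \subseteq K_G$), so this embedding is exactly into the extrinsic Galois group realized inside $G$, and it is definable over $K_X$ (in fact over $x_0$) by the usual argument that the graph of the map $\sigma \mapsto c(\sigma)$ is cut out by $tp(x_0, \sigma(x_0)/K_G)$. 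The translation between the ``intrinsic'' version (points $\sigma(x_0)x_0^{-1}$-style in $GL_{n_i}$) and this ``extrinsic'' version (points $c(\sigma)$ in $G$) is conjugation by $\rho(x_0\text{-data})$, as in Lemma 1.9(ii), and is defined over $K_X$. The main obstacle I anticipate is the bookkeeping needed to make precise the sense in which $\mathrm{Aut}_\partial(K_X/K_G)$ ``differentially algebraically over $K_X$ embeds in $G$'' — i.e.\ checking that the abstract-group embedding $\sigma \mapsto c(\sigma)$ is genuinely $K_X$-definable as a map of differential algebraic groups, which requires invoking the general model-theoretic fact (as used in Lemma 1.17 and in the proof of Proposition 2.5) that such Galois maps can be taken definable over the top field, together with care that $G$ being only a $fdLDAG$ (not yet trivialized until one passes to $K_G$) does not obstruct definability of the embedding over $K_X$.
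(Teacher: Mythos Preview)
Your overall plan mirrors the paper's, but there is a genuine gap in the middle paragraph where you argue that $K_X$ is a PV extension of $K$. You claim that for $\sigma\in Aut_\partial(K_X/K)$ the assignment $\sigma\mapsto c(\sigma)$, where $\sigma(x_0)=x_0*c(\sigma)$, is an embedding of $Aut_\partial(K_X/K)$ into $G(K^{diff})$ as abstract groups. It is not. The computation gives $c(\sigma\tau)=c(\sigma)\,\sigma(c(\tau))$, so $c$ is only a crossed homomorphism with respect to the (generally nontrivial) action of $Aut_\partial(K_X/K)$ on $G(K^{diff})$; it becomes a genuine homomorphism only once you restrict to automorphisms fixing $K_G$ (since $G(K^{diff})\subseteq K_G$). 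Nor is $c$ injective on the full group: $K_X=K_G\langle x_0\rangle$, so $\sigma$ is determined by $c(\sigma)$ together with $\sigma|_{K_G}$, not by $c(\sigma)$ alone. Consequently ``composing with $\rho$ realizes this group inside $H(C_K)$'' is unjustified, and you cannot invoke Fact~1.20 from this step.

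The paper repairs exactly this point by reversing the order of the argument: first one shows (as you correctly do in your final paragraph) that $Gal(K_X/K_G)$ embeds in $G$, hence via the $K_G$-definable isomorphism $\rho$ into an algebraic subgroup of $GL_n(C_K)$; then one observes that the extrinsic Galois group of $K_X$ over $K$ sits in an exact sequence with kernel $Gal(K_X/K_G)$ (linear) and quotient $Gal(K_G/K)$ (linear, since $K_G/K$ is PV by Lemma~1.17), and an extension of linear algebraic groups by linear algebraic groups in $C_K$ is again linear. Only then does Fact~1.20 apply to conclude that $K_X$ is PV over $K$. Your last paragraph on $Gal(K_X/K_G)$ is fine and matches the paper; the fix is to move that computation earlier and replace the direct ``embedding of $Aut_\partial(K_X/K)$ into $G$'' by the extension argument.
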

\begin{proof} By Lemma 1.17, $K_{G}$ is a $PV$ extension of $K$. Note that $K_{G}\leq K_{X}$ (as any element of $G(K^{diff})$ is obtained from
some two elements of $X(K^{diff})$).  As $G$ is internal to the constants, so is $X$, so by Lemma 1.21 $K_{X}$ is a strongly normal 
extension of $K$, and so also a strongly normal extension of $K_{G}$.  Fix $b\in X$ and note that $K_{X}$ is generated over $K_{G}$ by $b$ (as a differential field).  We consider now 
$Gal(K_{X}/K_{G}) = Aut_{\partial}(K_{X}/K_{G})$.  For $\sigma\in Gal(K_{X}/K_{G})$ there is a unique $g_{\sigma}\in G(K^{diff}) = G(K_{G})$ such that $\sigma(b) = bg_{\sigma}$, 
moreover $\sigma$ is determined by $g_{\sigma}$. This determines a definable over $K_{X}$ embedding of the extrinsic differential Galois group of $K_{X}$ over $K_{G}$ into $G$. 
By Lemma 1.17, there is a $K_{G}$-definable isomorphism of $G(K^{diff})$ with an algebraic subgroup of $GL_{n}(C_{K})$ (some $n$), hence the extrinsic differential  Galois
group of $K_{X}$ over $K_{G}$ is an algebraic subgroup of $GL_{n}(C_{K})$. 

But then the extrinsic differential Galois group of $K_{X}$ over $K$ is an extension of a linear algebraic group by a linear algebraic group (inside $C_{K})$), hence must also be linear. 
By Fact 1.20, $K_{X}$ is a $PV$ extension of $K$.
\end{proof}

\subsection{Notions of smallness}
 In this section $K$ is still a differential field with $C_{K}$ algebraically closed.

 We let $G$ be  a $fdLDAG$ over $K$ which is (almost) internal to the constants. Let $K_{G}$ be the $PV$ extension of $K$ generated by $G(K^{diff})$. 
As mentioned earlier, finiteness conditions on either $PV$ extensions of $K$ with Galois group $G$, or on $H^{1}_{\partial}(K,G)$ are too much to expect outside very special situations. 
We tentatively suggest some weaker conditions which will be explored in the future.  The basic smallness condition is that a collection of Picard-Vessiot extensions of $K$ corresponding to
a given condition is contained in a Picard-Vessiot extension. 

Here are four conditions on $G$.

\noindent
1) The collection of $PV$ extensions $L$ of $K$ with intrinsic Galois group $K$-definably isomorphic to $G$  lives inside (generates) a PV extension of $K$.
\newline
\noindent
2) The collection of $PV$ extensions $L$ of $K$ with intrinsic or extrinsic Galois group definably isomorphic to $G$ over $L$  lives inside (or generates) a PV extension of $K$.
\newline
\noindent
3) The collection of $K^{diff}$ points of left (right) torsors for $G$ over $K$ live inside (generate) a PV extension of $K$.
\newline
\noindent
4)  The collection of $PV$ extensions $L$ of $K$ containing $K_{G}$ such that the (intrinsic or extrinsic) Galois group of $L$ over $K_{G}$ definably over $L$ embeds in $G$, is contained in (generates) a $PV$ extension of K. 

We have various implications:

\vspace{2mm}
\noindent
(2) implies (1) is immediate.  
\newline
(3) implies (1) Lemma 1.9.
\newline
(4) implies (3) is Proposition 2.5. 

\vspace{5mm}
\noindent
A final remark concerns the interpretation of conditions such as (3).  It will mean that the constrained cohomology set $H^{1}_{\partial}(K,G)$ is really an object 
belonging to a first cohomology set related to algebraic actions of linear algebraic groups on each other.
We give a brief explanation.

Let us assume (3). Let $L$ be a $PV$ extension of $K$ such that for every torsor $X$ for $G$ over $K$, $X(K^{diff}) = X(L)$.
In particular $G(K^{diff}) = G(L)$.   So $H^{1}_{\partial}(K,G)$ =  $H^{1}_{\partial}(Gal(L/K), G(L))$.
Let $G_{1}$ be the extrinsic Galois group of $L$ over $K$. So $G_{1}$ is a linear algebraic group in the sense of the algebraically closed field
$C_{K}$. Let us write $K_{0}$ for this algebraically closed field $C_{K}$.
We also know (Lemma 1.17) that $G$ is definably isomorphic to a linear algebraic group $G_{2}$ in the sense of $K_{0}$.
Now the action of $Gal(L/K)$ on $G(L)$ via differential field automorphisms, yields an algebraic action of $G_{1}$ on $G_{2}$, or if you want a rational 
homomorphism from $G_{1}$ to $Aut(G_{2})$ where $Aut(G_{2})$ means automorphisms in the sense of algebraic groups.
A definable cocycle $f:Aut(L/K) \to G(L)$ will yield a morphism $f':G_{1} \to G_{2}$ (of algebraic varieties over $K_{0}$)  such that $f'(\sigma\tau) = f'(\sigma)\sigma(f'(\tau))$, and
define two such morphisms $f'$, $h'$ to be cohomologous if for some $b\in G_{2}$, $f'(\sigma) = b^{-1}h'(\sigma)\sigma(b)$ for all $\sigma\in G_{1}$.

Hence $H^{1}_{\partial}(K,G)$ can be described as the collection of algebraic cocycles from $G_{1}$ to $G_{2}$ (with respect to a given algebraic action of $G_{1}$ 
on $G_{2}$) up to being cohomologous, so an object living entirely in algebraic geometry.



\end{document}